\theoremstyle{plain}
\newtheorem*{Stkthm}{The Stacking Theorem}
\newtheorem{theorem}{Theorem}[section]
\newtheorem{lemma}[theorem]{Lemma}
\newtheorem{cor}[theorem]{Corollary}
\newtheorem{proposition}[theorem]{Proposition}
\newtheorem{rem}[theorem]{Remark}
\newtheorem{thm}[theorem]{Theorem}
\newtheorem{lem}[theorem]{Lemma}
\theoremstyle{definition}
\newtheorem{example}[theorem]{Example}
\newcommand{\Rad}{\mathrm{Rad}}
\newcommand{\C}{\mathbb{C}}
\newcommand{\I}{\mathbb{I}}
\renewcommand{\k}{\mathbbm{k}}
\newcommand{\B}{\mathcal{B}}
\newcommand{\cN}{\mathcal{N}}
\newcommand{\cO}{\mathcal{O}}
\newcommand{\cX}{\mathcal{X}}
\renewcommand{\O}{\mathcal{O}}
\newcommand{\y}{10}
\newcommand{\ya}{11}
\newcommand{\yb}{12}
\newcommand{\yc}{13}
\newcommand{\yd}{14}
\newcommand{\ye}{15}
\DeclareMathOperator{\GL}{GL}
\DeclareMathOperator{\Std}{Std}
\DeclareMathOperator{\stk}{\bf stk}
\DeclareMathOperator{\nil}{nil}
\DeclareMathOperator{\LS}{\operatorname{\bf LS}}
\DeclareMathOperator{\Span}{span}
\DeclareMathOperator{\Type}{Type}
\DeclareMathOperator{\Comp}{Comp}
\DeclareMathOperator{\Lie}{Lie}
\newcommand{\ub}{\underline{\b}}
\newcommand{\oF}{\bar{F}}
\newcommand{\g}{\mathfrak{g}}
\newcommand{\gl}{\mathfrak{gl}}
\renewcommand{\t}{\mathfrak{t}}
\renewcommand{\b}{\mathfrak{b}}
\newcommand{\n}{\mathfrak{n}}
\newcommand{\Ad}{\operatorname{Ad}}
\newcommand{\p}{\mathfrak{p}}
\newcommand{\Ind}{\operatorname{Ind}}
\newcommand{\codim}{\mathrm{codim}}
\def\into{\hookrightarrow}
\def\onto{\twoheadrightarrow}
\def\isoto{\overset{\sim}{\longrightarrow}}
\numberwithin{equation}{section}
\begin{document}
\title[Parabolic Induction for Springer fibres]{Parabolic Induction for Springer fibres}

\author{Lewis Topley}
\address{Department of Mathematical Sciences, University of Bath, Claverton Down, Bath, BA2 7AY, UK}\email{lt803@bath.ac.uk} 

\author{Neil Saunders}
\address{N.~Saunders: School of Computing and Mathematical Sciences, University of Greenwich, Park Row, London, SE10 9LS}
\email{n.saunders@greenwich.ac.uk}

\maketitle

\begin{abstract}
Let $G$ be a reductive group satisfying the standard hypotheses, with Lie algebra $\g$. For each nilpotent orbit $\O_0$ in a Levi subalgebra $\g_0$ we can consider the induced orbit $\O$ defined by Lusztig and Spaltenstein. We observe that there is a natural closed morphism of relative dimension zero from the Springer fibre over a point of $\O_0$ to the Springer fibre over $\O$, which induces an injection on the level of irreducible components. When $G = \GL_N$ the components of Springer fibres was classified by Spaltenstein using standard tableaux. Our main results explains how the Lusztig--Spaltenstein map of Springer fibres can be described combinatorially, using a new associative composition rule for standard tableaux which we call {\it stacking}.
\end{abstract}

\section{Introduction}

Modern representation theory is a harmonious confluence of algebra, geometry and combinatorics. There are now many examples of algebraic problems which exhibit complementary geometric and combinatorial solutions. In this article we introduce a natural morphism between certain algebraic varieties which are ubiquitous in representation theory, and we describe this morphism combinatorially, thus continuing the trialogue between these fields.

The theory of Lusztig--Spaltenstein (LS) induction is a geometric process for constructing nilpotent orbits in the Lie algebra of a reductive group from the nilpotent orbits in Levi subalgebras. Nilpotent orbits are one of the most pervasive geometric objects appearing in representation theory, taking a starring role in:
\begin{enumerate}
\item Springer's construction of Weyl group representations \cite{CG}
\item associated varieties of primitive ideals of enveloping algebras \cite[\textsection 9]{JaNO}.
\item conical symplectic singularities \cite{Bea}.
\item modular representation theory of Lie algebras \cite{BMR}.
\end{enumerate}
It is therefore unsurprising that LS induction has played a important role in these theories, serving as a geometric analogue of parabolic induction of representations.

Let $G$ be a reductive algebraic group over an algebraically closed field $\k$ of any characteristic and assume the standard hypotheses \cite[\textsection 2.9]{JaNO}. Perhaps the most famous projective variety appearing in representation theory is the flag variety $G/B$ of a reductive algebraic group $G$. This is the base space of the Springer resolution $T^* G/B$, and can be identified with the set of Borel subalgebras in $\g$. The fibres of the resolution $T^* G/B \to \cN(\g)$ are the the {\it Springer fibres}: for $e\in \cN(\g)$ the fibre is described as
$$\B_e := \{gB \in G/B \mid e\in \Ad(g) \b\}$$
They carry a wealth of representation theoretic information, placing them at the very core of geometric representation theory. We begin this paper by showing that LS induction of orbits induces a morphism of relative dimension zero between the corresponding Springer fibres (Proposition~\ref{P:LSwelldefined}). Let us state this more precisely. If $\g_0 \subseteq \p \subseteq \g$ is a Levi factor of a parabolic and $\O_0 \subseteq \cN(\g_0)$ is an orbit then for $e_0 \in \O_0$ and $e \in (e_0 + \n) \cap \Ind_{\g_0}^\g(\O_0)$ we define a morphism
\begin{eqnarray}
\label{e:LSintro}
\LS : \B_{e_0}^0 \longrightarrow \B_e
\end{eqnarray}
where $\B^0 $ is the flag variety for $G_0$. Since this definition is elementary this construction is probably well-known to experts, however we cannot find a reference in the literature.

When $G = \GL_N$ the theory of Springer fibres is especially nice. Spaltenstein showed that there is a map $\Phi : \B_e \to \Std(\lambda)$ where $\lambda \vdash N$ and $\Std(\lambda)$ denotes the set of standard tableaux of shape $\lambda$. For $\sigma \in \Std(\lambda)$ the closure $C_\sigma:= \overline{\Phi^{-1}(\sigma)}$ is an irreducible component of $\B_e$. Furthermore the combinatorics of the tableaux control aspects of the geometry of the sets $\Phi^{-1}(\sigma)$ and their closures \cite{Spa, FM}. We abuse notation viewing $\Phi$ as a bijection $\Comp \B_e \to \Std(\lambda)$.

If $\g_0$ is a Levi subalgebra then $\g \cong \gl_{\lambda_1} \times \cdots \times \gl_{\lambda_n}$ for some $\lambda \vDash N$, and the nilpotent orbits in $\g_0$ are classified by a tuple of partitions $(\mu_1,...,\mu_n)$ with $\mu_i \vdash \lambda_i$. If $\O_0 \subseteq \cN(\g_0)$ is classified by $(\mu_1,...,\mu_n)$ then Spaltenstein's construction gives a bijection $\Phi_0 : \Comp \B_{e_0}^0 \to \prod_{i=1}^n \Std(\mu_i)$.

The purpose of this paper is to describe the combinatorics lying behind the Lusztig--Spaltenstein morphism \eqref{e:LSintro}. In Section~\ref{ss:combinatorics} we introduce a new map $\stk : \prod_{i=1}^n \Std(\mu_i) \to \Std(\lambda)$, combinatorially defined, which we call {\it the stacking map}. This is our main result (Cf. Theorem~\ref{T:stackwithproof}).

\begin{Stkthm}
The combinatorics of components of Springer fibres under $\LS$ is determined by the stacking map, so the following diagram commutes
\begin{center}
\begin{tikzpicture}[node distance=2.8cm, auto]
 \node (A) {$\Comp \B^0_{e_0}$};
 \node (B) [below of= A] {$\prod_{i=1}^n \Std(\mu_i)$};
 \node (C) [right of= A] {$\Comp \B_e$};
  \node (D) [right of= B]{$\Std(\lambda).$};
 \draw[->] (A) to node [swap]{$\Phi_0$} (B);
 \draw[->] (A) to node {$\LS$} (C);
 \draw[->] (B) to node [swap]{$\stk$} (D);
 \draw[->] (C) to node {$\Phi$} (D);
\end{tikzpicture}
\end{center}
\end{Stkthm}

Springer fibres bear a very close relationship to orbital varieties (see \cite[\textsection~9]{JaNO}, for example). In \cite{FMe} Fresse and Melnikov explain that Lusztig--Spaltenstein induction leads to a morphism of orbital varieties, similar to our construction. It is natural to expect that our induction of Springer fibres is compatible with their construction.

\subsection*{Acknowledgements} Many of the ideas which we present in this paper were initiated at the workshop ``Springer fibres and Geometric Representation Theory'', organised by the authors, and held at the University of Greenwich in 2019. The authors would like to thank the Heilbronn Institute for Mathematical Research for their generous support which made that event possible, and the participants for sharing their insights. The first author's research is supported by UKRI grant MR/S032657/1. The second author benefited from the LMS Scheme 4, grant number 42037.

\section{Combinatorics }

\subsection{Partitions, compositions and Young tableaux}
\label{ss:combinatorics}

Throughout the paper $N > 0$. Let $\lambda = (\lambda_1,...,\lambda_n)$ be a tuple of positive integers. If $\sum_i \lambda_i = N$ then we write $\lambda \vDash N$ and call $\lambda$ a {\it composition of $N$}, and if $\lambda$ is a non-increasing ordered composition then we write $\lambda \vdash N$ and say that $\lambda$ is a {\it partition of $N$}. We write $\ell(\lambda) = n$ for the length of $\lambda$. Denote the set of partitions of $N$ by $\mathcal{P}_{N}$ \\

When $\lambda \vdash N$, a {\it Young diagram of shape $\lambda$} is an array of boxes where the $i$-th row consists of $\lambda_{i}$ boxes (the top row is the first and the bottom row is the $n$-th). A {\it standard tableau} of shape $\lambda$ is a filling of a Young diagram of shape $\lambda$ with the alphabet $\{1, 2, \ldots, N\}$ in such a way that the numbers are increasing along rows and down columns. The set of standard tableaux of shape $\lambda$ is denoted $\Std(\lambda)$. For $T^{\lambda} \in \Std(\lambda)$ we write $t_{i,j}^{\lambda}$ for the $(i,j)$ entry of $T^{\lambda}$, for $1 \leq i \leq \ell(\lambda)$ and $1 \leq j \leq \lambda_{i}$. 

\subsection{Ordering on tableaux} \label{subsection:ordering} For this section, we follow the notational convention of Spaltenstein \cite{Spa}. Fix $\lambda \vdash n$ and let $\sigma$ denote a standard Young tableau of shape $\lambda$. For each $i \in \{1,\ldots, n\}$, let $\sigma_i$ denote the column of $\sigma$ in which the entry $i$ occurs. Observe that $\sigma$ is completely determined by the sequence $\sigma_1, \ldots, \sigma_n$. \\

For $\sigma, \tau \in \Std(\lambda)$ declare that $\sigma < \tau$ if for some $1 \leq i \leq n$, we have $\sigma_i < \tau_i$ and for each $i \leq j \leq n$, $\sigma_j=\tau_j$. This defines a total ordering on $\Std(\lambda)$. 

\begin{example}
For $\lambda=(3,2)$, the following represents the total ordering:
$$
\young(123,45)< \young(124,35)< \young(134,25)<  \young(125,34) < \young(135,24)
$$
\end{example}

\subsection{Stacking tableaux}
Let $\lambda = (\lambda_1, \lambda_2, \cdots, \lambda_n) \vDash N$. Now for each $i =1,...,n$ we pick a partition $\mu_i = (\mu_{i,1},...,\mu_{i, m_i})$ of $\lambda_i$ and write $m := \max m_i$. Define a new partition
\begin{eqnarray}
\label{e:musigmadefn}
\mu^\Sigma = (\mu^\Sigma_1, \mu^\Sigma_2, ..., \mu^\Sigma_{m}),\\
\mu^\Sigma_j := \sum_{i=1}^m \mu_{i, j}.
\end{eqnarray}
where we adopt the convention $\mu_{i,j} = 0$ for $j > m_i$.\\

Now we define a map
\begin{eqnarray}
\stk : \prod_{i=1}^n \Std(\mu_i) \into \Std(\mu^\Sigma)
\end{eqnarray}
which we call {\it the stacking map}. This is the key new combinatorial construction of this paper. \\

Now for $\lambda \vDash N$ and $\mu_{i} \vdash \lambda_i$ as above, let $(T^{\mu_{1}}, T^{\mu_{2}}, \ldots, T^{\mu_{n}})$ be a tuple of tableaux in $\prod_{i=1}^n \Std(\mu_i)$. We describe the image of $(T^{\mu_{1}}, T^{\mu_{2}}, \ldots, T^{\mu_{m}})$ under the stacking map by determining its $(i,j)$-th entry, as follows:  for a fixed $1\le i \le m$ and $1\le j\le \mu^\Sigma_{i}$ let $k$ be the maximal index such that $\mu_{1,i} + \mu_{2,i} + \ldots + \mu_{k-1,i} < j$ (once again $\mu_{i,j} = 0$ for $j > m_i$).  Put
\begin{equation} \label{eq:stack}
\displaystyle \tilde{j}:= j -(\mu_{1,i} + \mu_{2,i} + \ldots + \mu_{k-1,i} ).
\end{equation}
Then define 
\begin{equation} 
t_{i,j}^{\Sigma}:=t_{i,\tilde{j}}^{\mu_{k}} + \sum_{l=1}^{k-1} \lambda_{l}
\end{equation}
to be the $(i,j)$-th entry of the tableau $T^{\Sigma}$, and we set
$$
\stk(T^{\mu_{1}}, T^{\mu_{2}}, \ldots, T^{\mu_{m}}  ) := T^\Sigma.
$$
Since each $T^{\mu_{i}}$ is standard, it follows that $T^{\Sigma}$ is standard of shape $\mu^{\Sigma}$. 

\begin{example}
Let $N=15$ and consider the partition $\lambda=(6,5,4)$. Define three further partitions of the parts of $\lambda$ as follows: $\mu_{1}=(3,3), \mu_{2}=(2,2,1)$ and $\mu_{3}=(1^4)$; and now consider three standard Young tableaux of shape $\mu_{i}$ for $i=1,2,3$. 
$$
\left( \young(134,256), \young(13,25,4), \young(1,2,3,4)\right)
$$
Then the new partition $\mu^{\Sigma}$ is $(6,6,2,1)$ and the image under the stacking map will be
$$
T^{\Sigma}=\young(13479\yb,2568\ya\yc,\y\yd,\ye)
$$
We explicitly confirm the $(3,2)$ entry of $T^{\Sigma}$. Set $i=3$ and $j=2$. Then the maximal $k$ such that $\mu_{1,3}+\mu_{2,3} + \ldots + \mu_{k-1,3} < 2$ is $k=3$ (where we note that $\mu_{1,3}=0)$. Hence $\tilde{j}=2-(\mu_{1,3}+\mu_{2,3})=2-(0+1)=1$ and so
$$
t_{3,2}^{\Sigma}=t_{3,1}^{\mu_{3}} + \sum_{l=1}^{2} \lambda_l = 3+(6+5)=14.
$$

Let us also calculate the $(3,1)$ entry of $T^{\Sigma}$. Set $i=3$ and $j=1$. Then the maximal $k$ such that $\mu_{1,3}+\mu_{2,3} + \ldots + \mu_{k-1,3} < 1$ is $k=2$, since $\mu_{1,3}=0$ and $\mu_{2,3}=1$. Hence $\tilde{j}=1 - (\mu_{1,3})=1-0=1$ and so:
$$
t_{3,1}^{\Sigma}=t_{3,1}^{\mu_{2}}+\sum_{l=1}^{1}\lambda_{l}=4+6=10
$$ 
\end{example}

\begin{proposition}
The tableau $T^{\Sigma}=\stk(T^{\mu_{1}}, T^{\mu_{2}}, \ldots, T^{\mu_{m}})$ is standard.
\end{proposition}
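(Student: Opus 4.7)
The plan is to reformulate the stacking construction more transparently and then check the three defining properties of a standard tableau separately. Unwinding \eqref{eq:stack}, row $i$ of $T^\Sigma$ is precisely the horizontal concatenation, as $k$ ranges over $1, \ldots, n$, of the $i$-th row of $T^{\mu_k}$ with each entry shifted by $s_k := \sum_{l<k}\lambda_l$: the index $k$ identifies which $T^{\mu_k}$ contributes to column $j$ of row $i$, and $\tilde{j}$ is the local column within that block. In particular row $i$ of $T^\Sigma$ has length $\sum_{k=1}^n \mu_{k, i} = \mu^\Sigma_i$, so $T^\Sigma$ has shape $\mu^\Sigma$; moreover the $k$-th block contributes the set $\{s_k+1, \ldots, s_{k+1}\}$, and these sets are pairwise disjoint with union $\{1, \ldots, N\}$, so $T^\Sigma$ is a bijective filling of the diagram with $\{1, \ldots, N\}$.

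Row-strictness is then immediate: within a block the entries strictly increase since $T^{\mu_k}$ is standard, while at any boundary between consecutive non-empty blocks in row $i$, the last entry of the block from $T^{\mu_k}$ is at most $\lambda_k + s_k = s_{k+1}$, whereas the first entry of the next block (from $T^{\mu_{k+r}}$ for some $r \ge 1$) is at least $1 + s_{k+r} \ge 1 + s_{k+1}$.

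The substantive step is column-strictness. Suppose both $(i, j)$ and $(i+1, j)$ occur in $T^\Sigma$, drawn from $T^{\mu_k}$ at local column $\tilde{j}$ and from $T^{\mu_{k'}}$ at local column $\tilde{j}'$ respectively. The critical structural input is that each $\mu_l$ is a partition, so $\mu_{l, i+1} \le \mu_{l, i}$; summing for $l < k$ yields $\sum_{l<k}\mu_{l, i+1} \le \sum_{l<k}\mu_{l, i} < j$, which forces $k' \ge k$ by the maximality in the definition of $k'$. If $k' > k$ the shifts alone suffice: the value at $(i, j)$ is at most $s_{k+1}$ while the value at $(i+1, j)$ is at least $1 + s_{k'} \ge 1 + s_{k+1}$. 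If $k' = k$ then the same partition inequality gives $\tilde{j} \le \tilde{j}'$, and both $(i, \tilde{j})$ and $(i+1, \tilde{j}')$ lie in the diagram of $\mu_k$; chaining column- and row-strictness of $T^{\mu_k}$ gives $t_{i, \tilde{j}}^{\mu_k} < t_{i+1, \tilde{j}}^{\mu_k} \le t_{i+1, \tilde{j}'}^{\mu_k}$, and adding the common shift $s_k$ concludes. The main, if modest, obstacle is precisely the bookkeeping in the case $k' = k$ (in particular verifying $\tilde{j} \le \mu_{k, i+1}$ so that $(i+1, \tilde{j})$ is a legitimate cell of $\mu_k$); beyond this the conclusion follows directly from the standardness of each $T^{\mu_k}$ together with the partition condition on each $\mu_l$.
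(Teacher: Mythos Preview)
Your proof is correct and follows essentially the same approach as the paper: a case analysis on whether two cells of $T^{\Sigma}$ draw their entries from the same $T^{\mu_k}$ or from different ones, settling the former by standardness of $T^{\mu_k}$ and the latter by the shift inequalities. In fact your column-strictness argument is more careful than the paper's, which asserts $k \le k'$ without justification and, in the case $k = k'$, tacitly treats the local column indices $\tilde{j}$ and $\tilde{j}'$ as equal; your use of the partition inequality to get $\tilde{j} \le \tilde{j}'$ and the chained inequality $t_{i,\tilde{j}}^{\mu_k} < t_{i+1,\tilde{j}}^{\mu_k} \le t_{i+1,\tilde{j}'}^{\mu_k}$ fills that gap.
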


\begin{proof}
We first show that $T^{\Sigma}$ is increasing along columns. Fix $i,j$ and $j'$ in the appropriate range and suppose that $j < j'$. Let $k$ be maximal such that $\mu_{1,i}+ \ldots + \mu_{k-1,i} < j$ and let $k'$ be maximal such that $\mu_{1,i}+ \ldots + \mu_{k'-1,i} < j'$. Since $j < j'$, it follows that $k \leq k'$. Suppose that $k=k'$. Put $\displaystyle \tilde{j}=j-\sum_{l=1}^{k-1} \lambda_{l}$ and $\tilde{j'}=j-\sum_{l=1}^{k'-1} \lambda_{l}$. It is clear that $\tilde{j} < \tilde{j'}$ and since $T^{\mu_{k}}$ is standard, we have $t_{i,\tilde{j}}^{\mu_{k}} < t_{i,\tilde{j'}}^{\mu_{k}}$ and so $t_{i,j}^{\Sigma} < t_{i,j'}^{\Sigma}$. If $k<k'$, then since $1\leq t_{i,\tilde{j}}^{\mu_{k}} \leq \lambda_{k}$ we have:
$$
t_{i,j}^{\Sigma} = t_{i,\tilde{j}}^{\mu_{k}} + \sum_{l=1}^{k-1}\lambda_{l} < \sum_{l=1}^{k'-1}\lambda_{l} <  t_{i,\tilde{j'}}^{\mu_{k'}} + \sum_{l=1}^{k-1}\lambda_{l} = t_{i,j'}^{\Sigma}
$$
Hence $T^{\Sigma}$ is increasing along columns. \vspace{5pt}

We now show that $T^{\Sigma}$ is increasing down rows. Now fix $i, i'$ and  $j$ in the appropriate range and suppose that $i < i'$. Then $k \leq k'$. If $k=k'$, then since $T^{\mu_{k}}$ is a standard tableau, it follows that $t_{i,j}^{\mu_{k}} < t_{i',j}^{\mu_{k}}$. If $k < k'$ then again since $1 \leq t_{i,j}^{\mu_{k}} \leq \lambda_{k}$, we have:
$$
t_{i,j}^{\Sigma} = t_{i,\tilde{j}}^{\mu_{k}} + \sum_{l=1}^{k-1}\lambda_{l} < \sum_{l=1}^{k'-1}\lambda_{l} <  t_{i',\tilde{j}}^{\mu_{k'}} + \sum_{l=1}^{k-1}\lambda_{l} = t_{i',j}^{\Sigma}
$$
and so $T^{\Sigma}$ is increasing down rows. Hence $T^{\Sigma}$ is a standard tableau. 
\end{proof}

\section{Lie algebras and Springer fibres}

\subsection{Lie algebras of reductive algebraic groups}

Fix a $p \ge 0$ either zero or prime. Pick once and for all an algebraically closed field $\k$ of characteristic $p$. All vector spaces, algebras and algebraic varieties will be defined over $\k$. If $X$ is any variety over $\k$ then we write $\Comp(X)$ for the set of irreducible components. Let $G$ be a reductive algebraic group over $\k$ and assume the standard hypotheses (\cite[§2.9]{JaNO}) so that, in particular, $p$ is a good prime for the root system of $G$.\\

Write $\g = \Lie(G)$. When $p > 0$ we write $x \mapsto x^{[p]}$ for the natural $G$-equivariant restricted structure on $\g$. The definition of a nilpotent element of $\g$ depends on whether or not $p = 0$. For $p = 0$ a {\it nilpotent element} $e\in \g$ is one which acts nilpotently on every finite dimensional representation. For $p > 0$ a {\it nilpotent element} is one satisfying $e^{[p]^i} = 0$ for $i \gg 0$. For example when $G = \GL_N$ nilpotent elements are just the matrices which act nilpotently on the natural representation $\k^N$ of $\g$ (this description is independent of $p$). Write $\cN(\g) \subseteq \g$ for the {\it nilpotent cone}, defined to be the closed algebraic subvariety consisting of nilpotent elements.\\

When $x\in G$ we write $G^x$ and $\g^x$ for the stabliser and centraliser respectively. By \cite[§2.9]{JaNO} we have
\begin{eqnarray}
\label{e:centraliservsLiealg}
\Lie(G^x) = \g^x
\end{eqnarray}
for all $x\in \g$.\\ 

As usual $\B$ denotes the projective algebraic variety consisting of all Borel subalgebras of $\g$, the flag variety of $\g$. If we pick a Borel subgroup $B = T \ltimes N$, maximal torus $T$ and unipotent radical $N$, and Lie algebra $\b = \t \oplus \n$ then we may identify $G/B \isoto \B$ via the morphism of varieties $gB \mapsto g\cdot\b$ (see \cite[§10]{JaNO} for example). If $e \in \g$ is nilpotent then the {\it Springer fibre of $e$} is the closed subvariety
\begin{eqnarray}
\B_e := \{ \b \in \B \mid e\in \b\}
\end{eqnarray}
Identifying with $G/B$ this is equal to $\{gB \in G/B \mid e\in \Lie(g\cdot \b)\}$.
This is equal to the fibre over $e$ of Springer's resolution $\widetilde \cN(\g) \onto \cN(\g)$ of the nilpotent cone. The dimensions of Springer fibres are conveniently described as follows.
\begin{thm} \cite[Theorem~10.11]{JaNO}
\label{L:Springerdimension}
$\B_e$ is of pure dimension, and every irreducible component has dimension
$$\dim G/B - \frac{1}{2} \dim (G\cdot e) = \frac{1}{2}\codim_{\cN(\g)} (G\cdot e).$$
\end{thm}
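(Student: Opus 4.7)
The plan is to obtain purity and the dimension formula simultaneously from a double-fibration dimension count on the incidence variety
$$\widetilde{\O} := \{(\b',x) \in \B \times \O : x \in \b'\} \subseteq \widetilde{\cN}(\g), \qquad \O := G \cdot e,$$
using its two $G$-equivariant projections $p_1 : \widetilde{\O} \to \B$ and $p_2 : \widetilde{\O} \to \O$.

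First I would analyse $p_1$. The fibre over a Borel $\b'$ with nilradical $\n'$ is $\O \cap \b' = \O \cap \n'$, since any element of a nilpotent orbit lying in $\b'$ must lie in the nilradical. Because $G$ acts transitively on $\B$, all such fibres are $G$-conjugate, hence mutually isomorphic as varieties. The decisive input is Spaltenstein's theorem on orbital varieties: under the standard hypotheses each $\O \cap \n'$ is equidimensional of dimension $\frac{1}{2} \dim \O$. Granting this, $\widetilde{\O}$ is equidimensional of dimension $\dim \B + \frac{1}{2} \dim \O$, being equi-fibred over the smooth irreducible base $\B$.

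Next I would analyse $p_2$. The fibre over $e$ is $\B_e$, and $G$-equivariance combined with transitivity of $G$ on $\O$ implies that all fibres of $p_2$ are isomorphic. Because $G$ is connected, every irreducible component $X$ of $\widetilde{\O}$ is $G$-stable, hence surjects onto $\O$ under $p_2$, giving $\dim X = \dim \O + \dim(X \cap p_2^{-1}(e))$ with $X \cap p_2^{-1}(e)$ a union of components of $\B_e$. Comparing with the conclusion of the $p_1$-analysis, every irreducible component of $\B_e$ has dimension
$$\dim \B + \frac{1}{2} \dim \O - \dim \O = \dim G/B - \frac{1}{2} \dim(G \cdot e),$$
which proves both purity and the formula. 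The equivalent expression $\frac{1}{2} \codim_{\cN(\g)}(G \cdot e)$ then follows from the standard identity $\dim \cN(\g) = 2\dim G/B$ available under the standard hypotheses.

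The principal obstacle is the orbital variety input, namely that $\O \cap \n$ has pure dimension $\frac{1}{2} \dim \O$. This rests on the existence of a non-degenerate $G$-invariant symmetric bilinear form on $\g$, which allows one to realise $\n$ as a Lagrangian subspace of $\g$ and to recognise each irreducible component of $\O \cap \n$ as a Lagrangian subvariety of the symplectic orbit $\O$ (with its Kirillov--Kostant--Souriau form); combined with the identities $\dim\B = \dim\n$ and $\dim\cN(\g) = 2\dim\n$ this yields the half-dimensionality. Establishing this cleanly in arbitrary good characteristic is the technical heart; alternatively one may simply invoke the detailed exposition in \cite{JaNO}.
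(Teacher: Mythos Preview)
The paper does not give its own proof of this statement: it is quoted as a background result from Jantzen's notes \cite[Theorem~10.11]{JaNO} and used without further justification. Your sketch is essentially the standard argument one finds there---the double count on the incidence variety $\widetilde{\O}$ together with Spaltenstein's equidimensionality of $\O\cap\n$---so it is correct and matches the cited source.

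One small remark on rigour: the step ``$\widetilde{\O}$ is equidimensional \ldots\ being equi-fibred over the smooth irreducible base $\B$'' is not automatic from having isomorphic fibres alone; the clean way to say it is that $\widetilde{\O}\cong G\times^{B}(\O\cap\n)$ is an associated bundle, locally trivial over $\B$, so its components are in bijection with those of $\O\cap\n$ and each has dimension $\dim\B$ plus the dimension of the corresponding component of $\O\cap\n$. The analogous statement on the $p_2$ side uses $G$-equivariance and connectedness of $G$ exactly as you wrote. With that refinement your argument is complete.
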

The goal of this paper is to study the combinatorics of $\Comp(\B_e)$ for $G = \GL_n$ using the theory of induced nilpotent orbits.

\subsection{Lusztig--Spaltenstein induction for Springer fibres}

The theory of (Lusztig--Spaltenstein) induced unipotent classes was first introduced in \cite{LS} over $\C$. In this section we recap properties of induced nilpotent orbits for the Lie algebra of a reductive group, under the standard hypothesis, following \cite{PS}. We go on to explain how induction of orbits gives rise to a closed morphism of relative dimension zero between Springer fibres (Proposition~\ref{prop:LSmap}); this result was presumably well-known to experts. \\

A {\it Levi subalgebra of $\g$} is the Lie algebra of a Levi factor of a parabolic subgroup of $G$. Let $\g_0 \subseteq \g$ be a Levi subalgebra and let $\p$ be a parabolic subalgebra admitting $\g_0$ as a Levi factor. Write $\p = \g_0 \oplus \n$ for a Levi decomposition of $\p$, where $\n$ is the nilradical of $\p$. \\

If $\O_0$ is any nilpotent orbit in $\g_0$ then it is easily seen that $\Ad(G)(\O_0 + \n)$ contains a unique dense $G$-orbit which we denote $\Ind_{\g_0}^\g(\O_0)$, and call the {\it induced orbit from $(\g_0, \O_0)$}. As the notation suggests it only depends on the $G$-orbit of the pair $(\g_0, \O_0)$ and not on the choice of parabolic $\p$ containing $\g_0$ as a Levi factor. Furthermore induction satisfies the following two important properties:
\begin{lem}
\label{L:inductionproperties}
\begin{enumerate}
\setlength{\itemsep}{4pt}
\item (Transitivity) If $\g_0 \subseteq \g_1 \subseteq \g$ are Levi subalgebras and $\O_0 \subseteq \g_0$ is a nilpotent orbit then $\Ind_{\g}^\g(\O_0) = \Ind_{\g_0}^{\g_1} \Ind_{\g_1}^\g(\O_0).$
\item (Preservation of codimension) With $\O_0\subseteq \g_0$, \ $e_0 \in \g_0$ and $e\in \Ind_{\g_0}^\g(\O_0)$ we have $\dim \g_0^{e_0} = \dim \g^e$.
\item $\O_\p$ is a single $P$-orbit.
\end{enumerate}
\end{lem}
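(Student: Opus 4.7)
The plan is to follow the standard approach to LS induction in good characteristic, as in the reference \cite{PS} that the authors cite, addressing the three parts in the order (2), (3), (1). Throughout, let $P \subseteq G$ be the parabolic with $\Lie(P) = \p$ and Levi decomposition $P = L \ltimes U$, where $\Lie(L) = \g_0$ and $\Lie(U) = \n$.

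For part (2), the central tool is the $P$-equivariant collapsing morphism
$$\mu : G \times^P (\O_0 + \n) \longrightarrow \g, \qquad [g, x] \mapsto \Ad(g) x,$$
whose image lies in $\overline{\Ad(G)(\O_0 + \n)}$ and so in $\overline{\Ind_{\g_0}^\g(\O_0)}$. The source is irreducible of dimension $\dim G - \dim P + \dim(\O_0 + \n) = \dim G - \dim \g_0 + \dim \O_0$, since $\dim P = \dim \g_0 + \dim \n$. The decisive input, and the main obstacle, is that $\mu$ is generically finite onto its image; in characteristic zero this is classical from \cite{LS}, and the extension to reductive groups under the standard hypotheses is the technical content of \cite{PS}. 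Granted this, $\dim \Ind_{\g_0}^\g(\O_0) = \dim G - \dim \g_0 + \dim \O_0$, and combining with $\dim \g^e = \dim G - \dim(G\cdot e)$ together with the analogous identity in $\g_0$ and \eqref{e:centraliservsLiealg}, we conclude $\dim \g^e = \dim \g_0^{e_0}$.

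For part (3), set $\O_\p := \Ind_{\g_0}^\g(\O_0) \cap (\O_0 + \n)$, which is a nonempty $P$-stable open subset of the irreducible variety $\O_0 + \n$. For $x \in \O_\p$ we have $P^x \subseteq G^x$, so $\dim P^x \le \dim G^x = \dim \g_0^{e_0}$ by part (2). Then
$$\dim P\cdot x \;\ge\; \dim P - \dim \g_0^{e_0} \;=\; \dim \n + \dim \O_0 \;=\; \dim(\O_0 + \n),$$
forcing $P\cdot x$ to be dense in $\O_0 + \n$. Thus every point of $\O_\p$ has dense $P$-orbit in $\O_0 + \n$, and these dense orbits must coincide, so $\O_\p$ is a single $P$-orbit.

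For part (1), pick parabolics $\p_0 \subseteq \p_1 \subseteq \g$ with Levi factors $\g_0 \subseteq \g_1$, and let $\q := \p_0 \cap \g_1$ be the corresponding parabolic in $\g_1$ with Levi $\g_0$ and nilradical $\n_0' := \q \cap \n_0$, so that $\n_0 = \n_0' \oplus \n_1$. Write $\O_1 := \Ind_{\g_0}^{\g_1}(\O_0)$, which is the unique dense $G_1$-orbit in $\Ad(G_1)(\O_0 + \n_0')$. Then $\O_1 + \n_1$ is open and dense in $\Ad(G_1)(\O_0 + \n_0') + \n_1$, and saturating by $G$ shows $\overline{\Ad(G)(\O_1 + \n_1)} = \overline{\Ad(G)(\O_0 + \n_0)}$. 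Since each side contains a unique dense $G$-orbit, we obtain $\Ind_{\g_1}^\g(\O_1) = \Ind_{\g_0}^\g(\O_0)$, as required.
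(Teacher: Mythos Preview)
Your argument is correct and is precisely the standard line that the paper's cited references \cite{PS} and \cite{LS} follow; since the paper's own proof consists solely of those citations, there is nothing further to compare. One small remark: you (correctly) take $\O_\p = \Ind_{\g_0}^\g(\O_0) \cap (\O_0+\n)$, matching \cite[Theorem~1.3(c)]{LS}; the paper's subsequent formula $\O_\p = \Ind_{\g_0}^\g(\O_0) \cap \p$, read literally, is larger (an element of $\p$ need not project to $\O_0$ under $\p \to \g_0$) and would not in general be a single $P$-orbit, so your interpretation is the intended one.
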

\begin{proof}
The first two parts were first observed under the standard hypotheses in \cite[§2.5]{PS}. The third part was proven by Lusztig--Spaltenstein \cite[Theorem~1.3(c)]{LS} in the setting of complex algebraic groups, and the same proof works in our setting, applying \cite[Theorem~2.6(iv)]{PrKR}.
\end{proof}

Now let $\p\subseteq \g$ be a parabolic subalgebra and $\g_0$ a Levi factor. Pick a nilpotent orbit $\O_0\subseteq \g_0$ and write $\O_\p = \Ind_{\g_0}^\g(\O_0) \cap \p$. Pick an element $e \in \O_\p$ and write $e = e_0 + e_1$ for the decomposition of $e$ across $\p = \g_0 \oplus \n$. In this paper we study the following map
\begin{eqnarray}
\label{e:LSdefn}
\begin{array}{rcl}
\LS & : & \B^0 \longrightarrow \B,\\
& & \b_0 \longmapsto \b_0 \oplus \n.
\end{array}
\end{eqnarray}
We call this the {\it Lusztig--Spaltenstein morphism of Springer fibres}. Some basic properties are listed here.
\begin{proposition}
\label{P:LSwelldefined}
\begin{enumerate}
\setlength{\itemsep}{4pt}
\item $\LS$ is well-defined $G_0$-equivariant morphism of algebraic varieties.
\item $\LS$ restricts to a map $\B_{e_0}^0 \to \B_e$ which is closed and of relative dimension zero.
\end{enumerate}
\end{proposition}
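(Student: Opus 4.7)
The plan has two parts, corresponding to the two claims of the proposition.

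For part (1), I would first check the well-definedness on underlying sets. Given a Borel $\b_0 \subseteq \g_0$, the sum $\b_0 \oplus \n$ is a solvable subalgebra of $\p$ of the correct dimension (since $\dim \b_0 + \dim \n = \dim \b$, using that $\p$ is parabolic and $\b_0$ is Borel in the Levi factor); moreover it is maximal solvable because any strictly larger solvable subalgebra would, after intersecting with $\g_0$, produce a proper solvable overalgebra of $\b_0$ in $\g_0$, contradicting maximality. Hence $\b_0 \oplus \n \in \B$. To upgrade this set-map to a morphism of varieties, I would use the $G/B$ model: fix a Borel $B_0 \subseteq G_0$ with $\Lie B_0 = \b_0$ and let $N$ be the unipotent radical of the parabolic $P$ with $\Lie P = \p$, so $B := B_0 \ltimes N$ is a Borel of $G$. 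Then the morphism
\[
G_0/B_0 \longrightarrow G/B, \qquad g B_0 \longmapsto g B
\]
is induced by the inclusion $G_0 \hookrightarrow G$, and under the identifications $G_0/B_0 \isoto \B^0$ and $G/B \isoto \B$ this is precisely the set-theoretic map $\LS$. $G_0$-equivariance of $\LS$ now follows from $G_0$-equivariance of both identifications together with the fact that $G_0$ normalises $\n$ (so that $\Ad(g)(\b_0 \oplus \n) = \Ad(g)\b_0 \oplus \n$ for $g \in G_0$).

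For part (2), I would first check that $\LS$ carries $\B_{e_0}^0$ into $\B_e$: if $e_0 \in \b_0$ then since $e_1 \in \n$ by construction, $e = e_0 + e_1 \in \b_0 \oplus \n = \LS(\b_0)$. Closedness is then immediate: $\B^0_{e_0}$ is a closed subvariety of the projective variety $\B^0$, hence itself projective, and so its image under the morphism $\LS$ is closed in $\B_e$, and the morphism itself is closed.

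For relative dimension zero, the key observation is that $\LS$ is already injective on all of $\B^0$. Indeed, from $\b_0 \oplus \n \in \B$ one recovers $\b_0$ as the intersection with $\g_0$: since $\g_0 \cap \n = 0$ (as $\g_0$ is a Levi complement of $\n$ in $\p$), we have $(\b_0 \oplus \n) \cap \g_0 = \b_0$. The restriction to $\B_{e_0}^0$ is a fortiori injective, so all non-empty fibres are singletons and the map has relative dimension zero.

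The only non-routine ingredient is the well-definedness at the level of underlying sets --- specifically, verifying that $\b_0 \oplus \n$ really is a Borel subalgebra of $\g$ rather than merely a Borel of $\p$ --- but this reduces to the standard correspondence between Borel subalgebras of $\g$ contained in $\p$ and Borel subalgebras of its Levi quotient.
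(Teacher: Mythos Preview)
Your argument is correct, but it diverges from the paper's proof in two interesting ways.

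For part (1), the paper verifies that $\LS$ is a morphism by working in affine charts: it restricts to the big cells $\Omega_0 \subseteq \B^0$ and $\Omega \subseteq \B$, identifies each with affine space via products of one-parameter root subgroups, and checks that $\LS^*$ is the projection $\k[\mathbb{A}^{\Phi^+}] \to \k[\mathbb{A}^{\Phi_0^+}]$. Your approach via the quotient model $G_0/B_0 \to G/B$ induced by the inclusion $G_0 \hookrightarrow G$ (using $B_0 \subseteq B$) is shorter and avoids the explicit chart computation.

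For part (2), the interpretations of ``relative dimension zero'' differ. You prove injectivity of $\LS$ (recovering $\b_0$ as $(\b_0 \oplus \n) \cap \g_0$), so fibres are singletons. The paper instead appeals to the Springer fibre dimension formula (Theorem~\ref{L:Springerdimension}) together with preservation of codimension under induction (Lemma~\ref{L:inductionproperties}(2)) to deduce $\dim \B_{e_0}^0 = \dim \B_e$. Your route is more elementary, requiring no input from the theory of induced orbits. Be aware, though, that the paper's dimension equality is precisely what is used downstream in Corollary~\ref{cor:inducedmaponcomponents}: injectivity gives $\dim \LS(C) = \dim C$ for a component $C$, but to conclude that $\LS(C)$ is a \emph{component} of $\B_e$ one still needs $\dim C = \dim \B_e$. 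So your proof of the proposition itself is complete, but the deeper dimension comparison will have to be supplied at the next step.
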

\begin{proof}
Let $B \subseteq P$ be any Borel subgroup and pick a torus $T \subseteq B$. Let $\Phi \subseteq X^*(T)$ be the corresponding root system and $\Delta \subseteq \Phi$ the set of simple roots corresponding to $B$. Thanks to the classification of parabolic subgroups and their Levi factors (see \cite[Theorems~30.1\&~30.2]{Hum}, for example) we can choose $T$ so that $P$ is a standard parabolic with respect to $\Delta$. This means that after choosing a Levi factor $G_0$ containing $T$, and writing $\Phi_0 \subseteq \Phi$ for the root system of $G_0$, we have that $\Delta_0 := \Phi_0 \cap \Delta$ is a set of simple roots in $\Phi_0$.

We have implicitly chosen positive roots $\Phi_0^+,  \Phi^+$. Write $\Phi^+_1 = \Phi^+ \setminus \Phi^+_0$. Pick a set $\{x_\alpha \mid \alpha \in \g\} \subseteq \Phi$ of root vectors. Since $P$ is standard with respect to $\Delta$ we have $\p = \ub + \g_0$ and so $\n = \sum_{\alpha \in \Phi^+_1} \k x_\alpha$.

Since $G_0$ preserves $\n$ it follows from the definition of $\LS$ that for any Borel $\b_0 \subseteq \g_0$ we have $\LS(g\cdot \b_0) = g\cdot \LS(\b_0)$ for all $g\in G_0$. So $\LS$ is a $G_0$-equivariant map sending vector spaces of $\g_0$ to subspaces of $\g$.

Now let $\b_0 \subseteq \g_0$ be the unique Borel subalgebra containing $\t$ and $\{x_\alpha \mid \alpha \in \Phi_0^+\}$ for $\alpha \in \Phi_0^+$. By construction $\LS(\b_0) = \b$. Since $\B^0 = G_0 \cdot \b_0$ the $G_0$-equivariance implies that $\LS(\B^0)\subseteq \B$. So $\LS$ is well-defined.

We now check that $\LS$ is a morphism. Let $B_0$ be the Borel subgroup of $G_0$ with Lie algebra $\b_0$ chosen in the previous paragraph, and $B_0^- \subseteq G_0$ be the Borel subgroup of $G_0$ opposite to $B_0$. Write $U_0^-$ for the unipotent radical. Recall that the big cell $\Omega_0 \subseteq \B_0$ is the image of $U_0^-$ under the map $G_0 \to \B^0$ given by $g\mapsto g\cdot \b_0$ (see \cite[28.5]{Hum} for example), and that the map $U_0^- \to \B^0$ is injective. Since $\LS$ is $G_0$-equivariant and $\B^0$ has an affine cover by $G_0$-translates of $\Omega_0$ it will suffice to show that $\LS|_{\Omega_0}$ is a morphism. Writing $U^-$ for the unipotent radical of the opposite Borel to $B$ we have a similar description of the big cell $\Omega\subseteq \B$. It follows from $B_0 \subseteq B$ that $\LS(\Omega_0) \subseteq \Omega$ and so it suffices to show that the pullback $\LS^* : \k[\Omega] \to \k[\Omega_0]$ is a homomorphism.

For $\alpha\in \Phi$ let $u_\alpha : \k \to G$ be the corresponding 1-parameter subgroup. For a fixed total order on $\Phi$ we have isomorphisms $\mathbb{A}^{\Phi^+} \isoto U^-$ and $\mathbb{A}^{\Phi^+_0} \isoto U^-_0$ given by $(t_{\alpha})_{\alpha \in \Phi^+} \mapsto \prod_{\alpha \in \Phi^+} u_{-\alpha}(t_{\alpha})$, and similar for $U^-_0$. Now after identifying through the isomorphism $\Omega \cong U^- \cong \mathbb{A}^{\Phi^+}$ and $\Omega_0 \cong  U_0^- \cong \mathbb{A}^{\Phi_0^+}$ we see that $\LS^*$ is just the projection homomorphism $\k[\mathbb{A}^{\Phi^+}] \onto \k[\mathbb{A}^{\Phi_0^+}]$ corresponding to the inclusion $\Phi_0^+ \subseteq \Phi_0$. Thus $\LS$ is a morphism.

The relative dimension of $\B_{e_0}^0 \to \B_e$ is zero thanks to \eqref{e:centraliservsLiealg}, Lemma~\ref{L:Springerdimension} and Lemma~\ref{L:inductionproperties}(2). Since $\LS$ is a morphism of projective (hence complete) varieties, it follows from \cite[Proposition~6.1]{Hum} that it is closed. This completes the proof. 
\end{proof}

\begin{cor}
\label{cor:inducedmaponcomponents}
The map on Springer fibres induces a map on the sets of components: for every $C \in \Comp(\B^0_{e_0})$ we have $\LS(C) \in \Comp(\B_e)$.
\end{cor}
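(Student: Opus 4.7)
The approach is a direct dimension count using the two properties of $\LS$ established in Proposition~\ref{P:LSwelldefined}(2): closedness and relative dimension zero.

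First, since $C$ is irreducible and $\LS$ is continuous, $\LS(C)$ is irreducible; since $\LS$ is closed, $\LS(C)$ is closed in $\B_e$. Thus $\LS(C)$ is a closed irreducible subvariety of $\B_e$.

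Next I would verify that $\dim \LS(C) = \dim C$. Restricting $\LS$ gives a dominant morphism $C \to \LS(C)$, and relative dimension zero means every (non-empty) fibre of $\LS$ is zero-dimensional, so the fibre dimension theorem forces the source and the image to have the same dimension. Separately, Theorem~\ref{L:Springerdimension} combined with the identity $\dim \g_0^{e_0} = \dim \g^e$ from Lemma~\ref{L:inductionproperties}(2), and the equality of the ranks of $\g_0$ and $\g$, yields the equality $\dim \B^0_{e_0} = \dim \B_e$ of the pure dimensions of the two Springer fibres.

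To conclude, since $\B^0_{e_0}$ is equidimensional by Theorem~\ref{L:Springerdimension}, the component $C$ has $\dim C$ equal to the pure dimension of $\B^0_{e_0}$, which by the previous step equals the pure dimension of $\B_e$. Hence $\LS(C)$ is an irreducible closed subvariety of the equidimensional variety $\B_e$ attaining the maximal possible dimension, and therefore $\LS(C) \in \Comp(\B_e)$. I see no real obstacle here: the argument is essentially formal once one has the closed, fibre-finite morphism of Proposition~\ref{P:LSwelldefined}(2); the only point requiring care is the equality of pure dimensions, which is immediate from Lemma~\ref{L:inductionproperties}(2).
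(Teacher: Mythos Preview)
Your argument is correct and is precisely the intended one: the paper states this result as an immediate corollary of Proposition~\ref{P:LSwelldefined} without giving a separate proof, and your dimension count (closedness plus zero-dimensional fibres plus the equality $\dim \B^0_{e_0} = \dim \B_e$ coming from Theorem~\ref{L:Springerdimension}, Lemma~\ref{L:inductionproperties}(2), and the equality of ranks) is exactly the formal verification the reader is meant to supply.
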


\begin{rem}
One of the most surprising features of Lusztig--Spaltenstein induction of nilpotent orbits is that it depends on the conjugacy class of $\g_0$, not on the conjugacy class of $\p$. It would be interesting to know whether a similar independence statement can be formulated for \eqref{e:LSdefn}.
\end{rem} 

\section{General linear lie algebras}

For the rest of the paper keep $N > 0$ fixed and choose an algebraically closed field of any characteristic $p \ge 0$. Let $G = \GL_N(\k)$ and $\g = \Lie(G) = \gl_N(\k)$. Let $\kappa$ denote the trace form associated to the natural representation $V = \k^N$. We note that the standard hypotheses are satisfied for $G$.

\subsection{Nilpotent Orbits, Levi subalgebras and induction}
\label{ss:nilpotentorbitslevisandinduction}

The nilpotent elements of $\g$ are those which act nilpotently on the natural representation, and we denote the set of such elements $\cN(\g)$. If $e\in \cN(\g)$ then we can decompose $V$ non-uniquely into indecomposable $\k[e]$-modules $V = \bigoplus_{i=1}^n V_i$ which we refer to as (a choice of) {\it Jordan block spaces for $e$}. A {\it Jordan basis for $e$} is a basis $\{v_{i,j} \mid i=1,...,n, \ j = 1,\ldots, \dim V_i\}$ such that $V_i$ has basis $\{v_{i,j} \mid j=1,\ldots, \dim(V_i)\}$
$$ev_{i,j} = \left\{\begin{array}{cc} v_{i, j-1} & \text{ if } j > 1 \\ 0 & \text{ if } j = 1\end{array} \right.\vspace{6pt}$$

The $G$-orbits on $\cN(\g)$ are classified by partitions: for each $\lambda = (\lambda_1,...,\lambda_n) \vdash N$ we let $\O_\lambda$ denote the $G$-orbit consisting of elements with Jordan block spaces of dimension $\lambda_1, \lambda_2, ...,\lambda_n$.\\

The Levi subalgebras of $\g$ are also classified by partitions of $N$: for $\lambda \vdash N$ choose any vector space decomposition $V = \bigoplus_{i=1}^n V_i$ where $\dim V_i = \lambda_i$, let $\g_\lambda \cong \bigoplus \gl_{\lambda_i}$ be the subalgebra of $\g$ which preserves each $V_i$. This defines a bijection from partitions of $N$ to conjugacy classes of Levi subalgebras (see \cite[§7.2]{CM} for example).\\

Now we are in a position to describe Lusztig--Spaltenstein induction of nilpotent orbits. Let $\lambda \vdash N$ and let $\g_\lambda$ be a choice of Levi subalgebra, as described above. Suppose that $G_\lambda \subseteq G$ is a Levi subgroup with $\g_\lambda = \Lie(G_\lambda)$. The basic result for describing induced nilpotent $G$-orbits is due to Kraft and Ozeki--Wakimoto, independently.
\begin{lem} \cite[7.2.3]{CM}
\label{L:inducedfromzero}
If $\O_0$ is the zero orbit in $\g_\lambda$ then the partition associated to $\Ind_{\g_\lambda}^\g(\cO_0)$ is the transpose $\lambda^\top$.
\end{lem}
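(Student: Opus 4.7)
The plan is to exhibit an explicit element $e \in \n$ whose Jordan type is $\lambda^\top$, then identify its $G$-orbit as the induced one by a dimension count. After reordering we may assume $\lambda_1 \ge \lambda_2 \ge \cdots \ge \lambda_n$. Fix a decomposition $V = \bigoplus_{i=1}^n V_i$ with $\dim V_i = \lambda_i$, so that $\g_\lambda$ is the stabiliser of this decomposition, and let $\p = \g_\lambda \oplus \n$ be the parabolic whose nilradical is $\n = \{X \in \g : X(V_j) \subseteq V_1 \oplus \cdots \oplus V_{j-1}\text{ for all }j\}$. Pick bases $\{v_{i,1},\ldots,v_{i,\lambda_i}\}$ of each $V_i$ and define $e \in \n$ by $e(v_{i,j}) := v_{i-1,j}$ for $i \ge 2$ (well-defined since $j \le \lambda_i \le \lambda_{i-1}$) and $e(v_{1,j}) := 0$.

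For each column index $1 \le j \le \lambda_1$, the vectors $\{v_{i,j} : \lambda_i \ge j\}$ form an $e$-chain of length $\lambda^\top_j = \#\{i : \lambda_i \ge j\}$, and the union of these chains is a Jordan basis for $e$. Hence $e$ has Jordan type exactly $\lambda^\top$. Applying the classical formula $\dim \g^e = \sum_k (\mu^\top_k)^2$ (which follows from $\dim \g^e = \sum_{i,j}\min(\mu_i,\mu_j)$, for $\mu$ the Jordan type of $e$) at $\mu = \lambda^\top$ yields $\dim \g^e = \sum_k \lambda_k^2 = \dim \g_\lambda$.

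To conclude, note that $\overline{\Ad(G)\n}$ is irreducible, being the closure of the image of the irreducible variety $G \times \n$ under the action map, and that by definition $\Ind_{\g_\lambda}^\g(\O_0)$ is the unique dense $G$-orbit in this closure. By Lemma~\ref{L:inductionproperties}(2) applied to $e_0 = 0 \in \O_0$, any element $e'$ of the induced orbit satisfies $\dim \g^{e'} = \dim \g_\lambda^0 = \dim \g_\lambda = \dim \g^e$, so $G \cdot e$ and $\Ind_{\g_\lambda}^\g(\O_0)$ have the same dimension. Since $e \in \n \subseteq \overline{\Ad(G)\n}$, the orbit $G\cdot e$ attains the maximal possible orbit-dimension in this irreducible closure, so it must be dense there and coincide with $\Ind_{\g_\lambda}^\g(\O_0)$; therefore the associated partition is $\lambda^\top$. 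The only nontrivial ingredient is the centraliser dimension formula above; the remaining steps are bookkeeping with Jordan bases together with the preservation-of-codimension property from Lemma~\ref{L:inductionproperties}(2).
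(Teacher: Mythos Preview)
Your proof is correct. Note that the paper does not actually prove this lemma: it is stated with a citation to \cite[7.2.3]{CM} (the result being originally due to Kraft and Ozeki--Wakimoto, as the paper remarks), so there is no in-text argument to compare against. Your argument---constructing an explicit element $e\in\n$ of Jordan type $\lambda^\top$, computing $\dim\g^e=\sum_k\lambda_k^2=\dim\g_\lambda$ via the centraliser formula, and then matching this against the dimension of the induced orbit via preservation of codimension (Lemma~\ref{L:inductionproperties}(2)) to force $G\cdot e=\Ind_{\g_\lambda}^\g(\O_0)$---is essentially the standard proof one finds in the cited reference, so you have supplied a self-contained version of what the paper outsources. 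One cosmetic remark: since the paper already takes $\lambda\vdash N$ to be a partition, the reordering step at the start is unnecessary, though harmless.
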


We need to upgrade this result to describe induction from non-zero orbits. For each $i=1,...,n$ choose a partition $\mu_i = (\mu_{i,1},...,\mu_{i,m_i}) \vdash \lambda_i$. By the above remarks there is a nilpotent $G_0$-orbit $\O_\mu \subseteq \g_0$ such that the projection to the factor $\gl_{\lambda_i}$ is the $\GL_{\lambda_i}$-orbit classified by partition $\mu_i$.\\

Recall the definition of $\mu^\Sigma$ from \eqref{e:musigmadefn}.
\begin{cor}
\label{cor:partitionsigma}
The partition of $\Ind_{\g_\lambda}^\g(\cO_\mu)$ is $\mu^\Sigma$.
\end{cor}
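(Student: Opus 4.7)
The natural approach is to reduce to Lemma~\ref{L:inducedfromzero} via transitivity of induction (Lemma~\ref{L:inductionproperties}(1)). For each $i=1,\ldots,n$, the orbit $\O_{\mu_i} \subseteq \gl_{\lambda_i}$ is itself induced from the zero orbit of a Levi subalgebra: namely, by Lemma~\ref{L:inducedfromzero} applied inside $\gl_{\lambda_i}$, we have $\O_{\mu_i} = \Ind_{\g_0^{(i)}}^{\gl_{\lambda_i}}(0)$, where $\g_0^{(i)}$ is a Levi of $\gl_{\lambda_i}$ whose type is the transpose partition $\mu_i^\top$ (since the transpose of $\mu_i^\top$ is $\mu_i$). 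Concretely, $\g_0^{(i)} \cong \bigoplus_{j=1}^{\mu_{i,1}} \gl_{(\mu_i^\top)_j}$.

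Setting $\g_0 := \bigoplus_i \g_0^{(i)}$, this is a Levi subalgebra of $\g_\lambda$ (and hence of $\g$), and the product orbit $\O_\mu$ in $\g_\lambda$ is the induced orbit $\Ind_{\g_0}^{\g_\lambda}(0)$, because induction from a direct product of Levis is just the product of induced orbits in each factor. Applying transitivity,
\[
\Ind_{\g_\lambda}^{\g}(\O_\mu) \;=\; \Ind_{\g_\lambda}^{\g}\,\Ind_{\g_0}^{\g_\lambda}(0) \;=\; \Ind_{\g_0}^{\g}(0).
\]
Let $\nu$ denote the partition of $N$ obtained by listing all parts $(\mu_i^\top)_j$ in weakly decreasing order; this is the type of the Levi $\g_0 \subseteq \g$. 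A second application of Lemma~\ref{L:inducedfromzero} gives that the partition classifying $\Ind_{\g_0}^\g(0)$ is $\nu^\top$.

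It remains to verify the combinatorial identity $\nu^\top = \mu^\Sigma$. For each $j$,
\[
\nu^\top_j \;=\; \#\{(i,k) : (\mu_i^\top)_k \geq j\} \;=\; \sum_{i=1}^n \#\{k : (\mu_i^\top)_k \geq j\} \;=\; \sum_{i=1}^n \mu_{i,j} \;=\; \mu^\Sigma_j,
\]
using the standard identity $(\mu_i^\top)_k \geq j \iff \mu_{i,j} \geq k$ for the middle equality. This completes the proof.

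There is no real obstacle here: the argument is essentially an exercise in transitivity plus the Kraft/Ozeki--Wakimoto description of induction from zero orbits. The only content beyond bookkeeping is the (classical) transpose identity in the last display.
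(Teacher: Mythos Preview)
Your proof is correct and follows essentially the same route as the paper: in each factor $\gl_{\lambda_i}$ realise $\O_{\mu_i}$ as induced from the zero orbit of a Levi of type $\mu_i^\top$, apply transitivity to reduce to induction from zero in $\g$, and then invoke Lemma~\ref{L:inducedfromzero} together with the combinatorial identity $\nu^\top = \mu^\Sigma$. Your verification of the last identity is in fact more explicit than the paper's, which simply asserts it.
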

\begin{proof}
Writing $\g_\lambda = \bigoplus_{i=1}^n \gl_{\lambda_i}$ we pick a Levi subalgebra $\g_{\mu_i^\top}$ of $\gl_{\lambda_i}$ which has partition of type $\mu_i^\top$. Write $\g_{\mu^\top} = \bigoplus_{i=1}^n \g_{\mu_i^\top} \subseteq \bigoplus_{i=1}^n \gl_{\lambda_i}$ and note that $\g_{\mu^\top}$ is a Levi subalgebra of $\g$. Write $\O_0$ for the zero orbit in $\g_{\mu^\top}$. Then according to Lemma~\ref{L:inducedfromzero} $\O_\mu = \Ind_{\g_{\mu^\top}}^{\g_\lambda}(\O_0)$. Now by the transitivity of induction (Lemma~\ref{L:inductionproperties}) we see that $\Ind^\g_{\g_\lambda}(\O_\mu) = \Ind_{\g_{\mu^\top}}^\g(\O_0)$. 

Now observe that if we concatenate $\mu_1^\top, \mu_2^\top,...,\mu_n^\top$ and reorder to make a partition $\nu \vdash N$ then $\nu^\top = \mu^\Sigma$. The proof concludes by applying Lemma~\ref{L:inducedfromzero} once more, to show that the partition of $\Ind_{\g_{\mu^\top}}^\g(\O_0)$ is $\nu^\top$.
\end{proof}

\subsection{A representative for the induced orbit}
\label{ss:representative}
In this Section we indicate a nice choice of representative for a nilpotent orbit $\O_0 \subseteq \g_0$ in a Levi subalgebra, and for the induced orbit $\O := \Ind_{\g_0}^\g(\O_0)$. This will be useful for illustrating some of our arguments below.

Fix a composition $\lambda = (\lambda_1, \lambda_2, \cdots, \lambda_n)$ of $N$. For each $i =1,...,n$ we pick a partition $\mu_i = (\mu_{i,1},...,\mu_{i, m_i})$ of $\lambda_i$. Now from this data we define a set
\begin{eqnarray}
\I = \{(i,j,k) \mid i=1,...,n, \ j=1,...,m_i, \ k=1,...,\mu_{i,j}\}
\end{eqnarray}
and we let $V$ be the $N$-dimensional complex vector space with basis $\{v_{i,j,k} \mid (i,j,k) \in \I\}$. We identify $V$ with the natural representation of the general linear Lie algebra $\g := \gl_N$, and so $\g$ admits a basis
\begin{eqnarray}
& &\{e_{i_1,j_1,k_1; i_2,j_2,k_2} \mid (i_1,j_1,k_1), (i_2,j_2,k_2) \in \I\},\\
& & e_{i_1,j_1,k_1; i_2,j_2,k_2}v_{i_3,j_3,k_3} = \delta_{i_2, i_3} \delta_{j_2, j_3} \delta_{k_2, k_3} v_{i_1, j_1, k_1}.
\end{eqnarray}

Now we define the Levi subalgebra $\g_0 \subseteq \g$ to be the subalgebra spanned by elements $\{e_{i, j_1, k_1; i, j_2, k_2} \mid i=1,...,n, \ \ (i, j_1, k_1), (i, j_2, k_2) \in \I\}$. This algebra is isomorphic to $\gl_{\lambda_1}\oplus \cdots \oplus \gl_{\lambda_m}$.

There is a corresponding decomposition of $V$: for $i=1,...,n$ fixed we let $V_i$ be the subspace spanned by $v_{i,j,k}$, allowing $j,k$ to vary. Then we have $V = \bigoplus_i V_i$, and $V_i$ identifies with the natural representation of $\gl_{\lambda_i}$

There is a parabolic subalgebra $\p \subseteq \g$  admitting $\g_0$ as a Levi factor, which is spanned by elements $e_{i_1, j_1, k_1; i_2, j_2, k_2}$ where $i_2 \ge i_1$. The nilradical $\n \subseteq \p$ consists of elements with $i_2 > i_1$.

Now we define some nilpotent elements. We let
\begin{eqnarray}
\label{e:e0defn}
e_0 := \sum_{i=1}^m \sum_{j=1}^{m_i} \sum_{k=1}^{\mu_{i,j} - 1} e_{i,j,k; i,j,k+1} \in \g_0.
\end{eqnarray}
This is a nilpotent element of $\g_0$. When restricted to the subspace $V_i$ the associated nilpotent operator has partition $\mu_i$. Now we define an element $e_1 \in \n$. Let $d_j$ be the number of indexes $1\le i \le n$ such that $m_i \ge j$ and let $\{i_1^j,...,i_{d_j}^j\} \subseteq \{1,...,n\}$ be the indexes satisfying $m_{i_k} \ge j$ for $k = 1,...,d_j$. Now we let
\begin{equation}
\label{e:e1defn}
e_1 := \sum_{j > 0} \sum_{k=1}^{d_j-1} e_{i_{k}^j, j, \mu_{i_{k}^j, j}; i_{k+1}^j, j, 1}
\end{equation}
and define
\begin{eqnarray}
e := e_0 + e_1.
\end{eqnarray}

The elements $e_0, e_1$ are easily understood pictorially, identifying the elements of the Jordan basis with the boxes in a tuple of Young diagrams. We illustrate this with an example.

\begin{example} 
\label{ex:representativediagram}
Let $\lambda=(7, 5, 12)$ be a composition of $24$ and let $$(\mu_1, \mu_2, \mu_3)=((4,2,1), (3,2), (3^2,2^2,1^2)).$$ In the following diagram, the elements of the Jordan basis $v_{i,j,l}$ are identified with the boxes of the Young diagrams in the obvious manner. The action of $e_0$ is illustrated in black, and $e_1$ in blue, as follows:

\begin{center}
\begin{tikzpicture}[scale=1]
\node at (0,0) {$\yng(4,2,1)$};
\node at (3,0.25) {$\yng(3,2)$};
\node at (6,-0.7) {$\yng(3,3,2,2,1,1)$};

\draw[->] (0.7,0.8) to[bend right]node[above]{\tiny $e_0$} (0.3,0.8);
\draw[->] (0.2,0.8) to[bend right] (-0.2,0.8);
\draw[->] (-0.3,0.8) to[bend right] (-0.7,0.8);
\draw[->] (-0.8,0.8) to[bend right] (-1.2,0.8) node[ above]{\tiny $0$};

\draw[->] (3.5,0.8) to[bend right]node[above]{\tiny $e_0$}  (3.1,0.8);
\draw[->] (3,0.8) to[bend right] (2.6,0.8);
\draw[->] (2.5,0.8) to[bend right] (2.1,0.8)node[ above]{\tiny $0$};

\draw[->] (6.5,0.8) to[bend right]node[above]{\tiny $e_0$}  (6.1,0.8);
\draw[->] (6,0.8) to[bend right] (5.4,0.8);
\draw[->] (5.3,0.8) to[bend right] (4.9,0.8) node[ above]{\tiny $0$};

\draw[blue, ->] (2,0.5) to node[below] {\tiny $e_{1}$} (1.3,0.5);
\draw[blue, ->] (2,0) to  (0.3,0);

\draw[blue, ->] (5,0.5) to node[below] {\tiny $e_{1}$} (4.3,0.5);
\draw[blue, ->] (5,0) to  (3.3,0);
\draw[blue, ->] (5,-0.45) to  (-0.3,-0.45);

\draw[blue, ->] (5,-.9) to  (4.5,-.9)node[left]{\tiny $0$};
\draw[blue, ->] (5,-1.4) to  (4.5,-1.4)node[left]{\tiny $0$};
\draw[blue, ->] (5,-1.9) to  (4.5,-1.9) node[left]{\tiny $0$};
\end{tikzpicture}\\
{\sf Diagram 1: The action of $e_0$ and $e_1$ on the natural representation.}
\end{center}

Hence we may think of $e_0$ and $e_1$ as acting on the boxes where:
\begin{itemize}
\item  $e_0$ moves a box in a given Young diagram to the box immediately to its left, unless it is in the first column of a Young diagram, in which case it sends it to $0$; and
\item $e_1$ sends a box in a given Young diagram to $0$ unless it is in the first column and there exists a box in the Young diagram immediately to to the left, 
in which case it sends it to the box to its immediate left. 
\end{itemize}

At this point, it is also worth illustrating how $e:=e_0 + e_1$ acts on the $N$-dimensional space $V$, via the `stacked' Young diagram. We picture the action as follows:

\begin{center}
\begin{tikzpicture}
\draw[->,black] (-2,1.5) to[bend right]node[above]{\tiny $e$}  (-2.4,1.5);
\foreach \i [evaluate={\j=\i-0.4; }] in {2,1.5,1,...,-1.5}
\draw[->,black] (\i,1.5) to[bend right] (\j,1.5);
\end{tikzpicture}\vspace{2pt}\\
\hspace{15pt}{\tiny 
 \ydiagram[*(darkgray)]
  {4,2,1}
  *[*(gray)]{7,4}
 * [*(white)]{10,7,3,2,1,1}}\\
 {\sf Diagram 2: The action of $e$ on the natural representation.}
\end{center}
Hence we may see that for each $ 1 \leq j \leq 10 =\sum_{k=1}^{3}\mu_{k,1}$, we may picture $\ker(e^j)$ as the span of the boxes in the first $j$ columns of the stacked Young diagram. The colours of the boxes will be useful for illustrating certain arguments later on.
\end{example}

\begin{lem}
$e\in \O_\p$.
\end{lem}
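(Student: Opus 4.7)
The plan is to prove the lemma by verifying the two defining conditions of $\O_\p = \Ind_{\g_0}^\g(\O_0) \cap \p$ separately: namely, that $e \in \p$ and that $e$ lies in the induced orbit.

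Membership in $\p$ is immediate from construction: by \eqref{e:e0defn} we have $e_0 \in \g_0 \subseteq \p$, and by \eqref{e:e1defn} every summand of $e_1$ is a root vector $e_{i_k^j, j, \mu_{i_k^j, j}; i_{k+1}^j, j, 1}$ with $i_{k+1}^j > i_k^j$, so $e_1 \in \n$. Hence $e = e_0 + e_1 \in \g_0 \oplus \n = \p$.

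For the second condition, I would compute the Jordan type of $e$ acting on $V$ and show that it equals $\mu^\Sigma$; Corollary~\ref{cor:partitionsigma} then gives $e \in \Ind_{\g_0}^\g(\O_0)$. The key is to trace the Jordan chains explicitly using the basis $\{v_{i,j,k}\}$. Fix a row index $j$ with $1 \le j \le m$. By definition of $d_j$ and $i_1^j < \cdots < i_{d_j}^j$, the row-$j$ basis vectors are precisely those $v_{i_k^j, j, l}$ with $1 \le k \le d_j$ and $1 \le l \le \mu_{i_k^j, j}$. Starting from the rightmost vector $v_{i_{d_j}^j, j, \mu_{i_{d_j}^j, j}}$ and applying $e$ repeatedly: $e_0$ decrements the last index while the result remains nonzero, and when we reach $v_{i_k^j, j, 1}$ the operator $e_0$ annihilates but $e_1$ sends it to $v_{i_{k-1}^j, j, \mu_{i_{k-1}^j, j}}$ (for $k > 1$), whereas for $k = 1$ both summands kill the vector. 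This produces a single Jordan chain of length $\sum_{k=1}^{d_j} \mu_{i_k^j, j} = \mu^\Sigma_j$ covering exactly the row-$j$ basis vectors. Since distinct values of $j$ produce chains on disjoint subsets of the basis, and the union of all these subsets is the full basis $\{v_{i,j,k}\}_{\I}$, the Jordan type of $e$ is $(\mu^\Sigma_1, \mu^\Sigma_2, \ldots, \mu^\Sigma_m)$, which is indeed a partition since each $\mu_i$ is a partition (so both $\mu_{i,j}$ and $d_j$ are non-increasing in $j$).

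Invoking Corollary~\ref{cor:partitionsigma}, the induced orbit $\Ind_{\g_0}^\g(\O_\mu)$ has Jordan partition $\mu^\Sigma$, so $e \in \Ind_{\g_0}^\g(\O_0)$. Combined with $e \in \p$, we conclude $e \in \O_\p$. The only place requiring care is the verification of the Jordan chain structure: one must be attentive to the interplay between $e_0$ (moving within a fixed Young diagram) and $e_1$ (jumping between consecutive diagrams whose $j$-th row is nonempty), and confirm that the gluing happens precisely at the boundary where $e_0$ vanishes. Diagram 2 in Example~\ref{ex:representativediagram} makes this transparent, and the formal proof simply follows the arrows read from right to left.
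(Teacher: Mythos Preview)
Your proof is correct and follows essentially the same approach as the paper's: both compute the Jordan type of $e$ to be $\mu^\Sigma$ by observing that for each fixed row index $j$ the row-$j$ basis vectors form a single Jordan chain, and then invoke Corollary~\ref{cor:partitionsigma}. The paper is more terse, simply asserting that the $v_{i,j,l}$ (with $i,l$ varying) span a Jordan block of size $\mu^\Sigma_j$, whereas you trace the chain explicitly and also record the (trivial) verification that $e \in \p$.
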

\begin{proof}
For $j=1,...,\max m_i$ fixed the basis vectors $v_{i,j,l}$ (allowing $i,l$ to vary) span a single Jordan block for $e$ of size $\sum_{i=1}^n \mu_{i,j} = \mu^\Sigma_j$. It follows that the $G$-orbit of $e$ has partition $\mu^\Sigma$ and the Lemma follows, thanks to Corollary~\ref{cor:partitionsigma}
\end{proof}

\subsection{Flags and Spaltenstein's description of the components of Springer fibres}
Let $e$ be a nilpotent element in $\gl_{N}$ of Jordan type $\lambda$ and let $\B_e$ be the corresponding Springer fibre over $e$. We recall that the points of $\B$ are described in terms of full flags of $V = \k^N$: a flag $F_\bullet = (0\subsetneq F_1\subsetneq \cdots \subsetneq F_{N-1} \subsetneq \k^N)$ corresponds to the Borel subalgebra consisting of elements of $\g$ preserving each $F_i$. Thus we may explicitly describe the Springer fibre in terms of flags as follows
$$
\B_e = \{ 0 \subset F_1  \subset \cdots \subset F_{N-1} \subset \C^N \, | \, \dim(F_i)=i, \ e(F_i) \subseteq F_{i-1} \} .
$$

For a fixed flag $F_{\bullet} \in \B_{e}$, by considering the Jordan type of $e$ restricted to the subspaces $F_i$, we obtain a natural map from the fibre to sequences of partitions: 
\begin{eqnarray}
\label{e:Spalmapdefn}
\begin{array}{rcl}
\Phi & : & \B_e  \longrightarrow \mathcal{P}_1 \times \mathcal{P}_2 \times \ldots \times \mathcal{P}_{N} \vspace{4pt} \\
	& & F_{\bullet}  \longmapsto (\Type(e_{|F_1}), \ldots, \Type(e_{|F_{n}})) 
	\end{array}
\end{eqnarray}

We identify partitions of $N$ with their corresponding Young diagrams. 

\begin{lemma} \label{lemma:spaltenstein}
Let $e$ be a nilpotent of Jordan type $\lambda$ and let $H$ be an $e$-stable hyperplane of $\C^n$. Then $\Type(e|_{H})$ is obtained by removing the last box from the $j$-column of $\lambda$ where $j$ is maximal such that $H \supseteq \ker(e^{j-1})$. 
\end{lemma}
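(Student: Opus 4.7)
The plan is to reduce the claim to a dimension count on the kernel filtration of $e$. I would invoke the standard fact that the Jordan type $\mu$ of a nilpotent operator $N$ is encoded in the sequence $d_k(N) := \dim \ker(N^k)$ via the formula $\mu^\top_k = d_k(N) - d_{k-1}(N)$ for the transpose partition. Thus it suffices to compute $d'_k := \dim(H \cap \ker(e^k)) = \dim \ker((e|_H)^k)$ and compare it with $d_k := \dim \ker(e^k)$.

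First I would unpack the maximality of $j$: by hypothesis $\ker(e^{j-1}) \subseteq H$ but $\ker(e^j) \not\subseteq H$, and nesting of the kernels gives $\ker(e^k) \subseteq H$ for $k \le j-1$ and $\ker(e^k) \not\subseteq H$ for $k \ge j$. This produces a clean dichotomy: $d'_k = d_k$ for $k \le j-1$, while for $k \ge j$ the hyperplane property forces $\ker(e^k) + H = V$, so $d'_k = d_k - 1$. Taking successive differences then shows that the transpose partition of $\Type(e|_H)$ coincides with $\lambda^\top$ except in position $j$, where it has dropped by one; this is precisely the assertion that the bottom box of column $j$ of $\lambda$ has been removed.

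The argument is clean linear algebra and I do not anticipate any real obstacle. The only subtle point worth confirming is that the resulting transpose sequence is still a valid partition — equivalently, that column $j$ has a removable corner — but this is automatic because $\Type(e|_H)$ is by definition the Jordan type of an honest nilpotent operator, so the existence of such an $H$ implicitly enforces the required inequality $\lambda^\top_j > \lambda^\top_{j+1}$ (or the fact that $j$ is the last column).
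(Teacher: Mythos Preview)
Your proof is correct but takes a genuinely different route from the paper. The paper argues in one line: one can choose a Jordan basis for $e$ on $V$ so that $H$ is spanned by all but one of the basis vectors, and then the result is immediate (the omitted vector must be the terminal vector of some Jordan chain, so its removal shortens exactly one block by one, and a quick check identifies the length of that block with the maximal $j$). Your approach instead bypasses any choice of basis and works purely with the kernel filtration, computing $\dim(H\cap\ker e^k)$ and reading off the transpose partition from successive differences. The paper's argument is slicker once the adapted basis is granted, but the existence of such a basis is itself a small lemma that is left unstated; your argument is self-contained linear algebra that avoids this hidden step entirely and makes the role of the index $j$ completely transparent.
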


\begin{proof}
We may choose a Jordan basis for the nilpotent $e$ on $V$ such that $H$ is spanned by all but one of those Jordan basis vectors. The result follows. 
\end{proof}

By a {\it nested sequence of partitions} we mean a sequence of partitions, or Young diagrams, for each $1 \leq k \leq n$ such that each Young diagram is obtained from the previous by adding a box to an available row, where available means that the diagram obtained by adding the box is indeed a Young diagram. \\

Lemma \ref{lemma:spaltenstein} shows that the image of $\Phi$ is the set of nested sequences of partitions which are in bijection with $\Std(\lambda)$. For instance, the standard tableau 
\begin{eqnarray}
\label{e:atab}
\young(134,25)
\end{eqnarray}
 corresponds to the nested sequence:
$$
\left(\varnothing,  \yng(1), \yng(1,1), \yng(2,1), \yng(3,1), \yng(3,2) \right)
$$
Hence we may identify the image of $\Phi$ with a subset of $\Std(\lambda)$ and a simple induction argument shows that $\Phi$ is a surjection onto $\Std(\lambda)$ and we have the following due to Spaltenstein:
\begin{theorem}\cite{Spa}
Let $e$ be nilpotent of Jordan type $\lambda$. Then:
\begin{enumerate}
\setlength{\itemsep}{4pt}
\item there is a bijection between $\Comp(\B_{e})$ and $\Std(\lambda)$ induced by $\Phi$ where for any $\sigma \in \Std(\lambda)$, $\overline{\Phi^{-1}(\sigma)}$ is an irreducible component. 
\item  (Cf. Theorem~\ref{L:Springerdimension}). $\B_{e}$ is of pure dimension and for all $\sigma \in \Std(\lambda)$,  $$\dim(\overline{\Phi^{-1}(\sigma)})=\frac{1}{2}\sum_{i \geq 1} \lambda_i^\top(\lambda^\top_i-1).$$
\end{enumerate}
\end{theorem}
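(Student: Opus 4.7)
The plan is to proceed by induction on $N$ using the morphism $\pi : \B_e \to \{e\text{-stable hyperplanes of } V\}$ defined by $F_\bullet \mapsto F_{N-1}$. Fix $\sigma \in \Std(\lambda)$, let $c$ denote the column of $\sigma$ containing the entry $N$, and let $\sigma' \in \Std(\lambda')$ be the tableau obtained by removing $N$, where $\lambda'$ is $\lambda$ with the terminal box of column $c$ deleted.

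First, by Lemma \ref{lemma:spaltenstein} an $e$-stable hyperplane $H \subset V$ satisfies $\Type(e|_H) = \lambda'$ if and only if $H \supseteq \ker(e^{c-1})$ and $H \not\supseteq \ker(e^c)$. Since $e$ is nilpotent, $e$-stability of $H$ is equivalent to $e(V) \subseteq H$, so the admissible set $U$ is an open subvariety of the projective space of hyperplanes containing $e(V) + \ker(e^{c-1})$. A routine block-by-block computation yields $\dim(e(V) + \ker(e^{c-1})) = N - \lambda_c^\top$, so $U$ is open in $\mathbb{P}^{\lambda_c^\top - 1}$, in particular nonempty and irreducible of dimension $\lambda_c^\top - 1$. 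The fibre of $\pi$ restricted to $\Phi^{-1}(\sigma)$ above $H \in U$ is exactly the stratum $\Phi_H^{-1}(\sigma')$ attached to $e|_H \in \gl(H)$, which by the inductive hypothesis is irreducible of dimension $\tfrac{1}{2}\sum_i (\lambda')^\top_i((\lambda')^\top_i - 1)$; local closedness of each stratum is clear from the rank conditions cutting out $\Phi$.

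A standard argument, for instance the iterated affine-paving due to Spaltenstein, now upgrades this into a locally trivial family, so $\Phi^{-1}(\sigma)$ is irreducible of dimension $\dim U + \dim(\text{fibre})$. The arithmetic identity
$$\tfrac{1}{2}\sum_i \lambda_i^\top(\lambda_i^\top - 1) - \tfrac{1}{2}\sum_i (\lambda')^\top_i((\lambda')^\top_i - 1) = \lambda_c^\top - 1$$
then identifies this total with the right-hand side of (2). By Theorem \ref{L:Springerdimension} combined with the orbit dimension formula $\dim G\cdot e = N^2 - \sum_i(\lambda_i^\top)^2$ for $\gl_N$, this is also the common dimension of every irreducible component of $\B_e$. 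Since the strata $\Phi^{-1}(\sigma)$ for $\sigma \in \Std(\lambda)$ are locally closed, irreducible, and form a disjoint cover of $\B_e$ all of top dimension, their closures are precisely the irreducible components, establishing the bijection in (1); part (2) then follows from Theorem \ref{L:Springerdimension}. Surjectivity of $\Phi$ onto $\Std(\lambda)$ falls out of the induction via the nonemptiness of $U$ at each step. The main technical obstacle is the careful verification that $\pi$ really does behave as a fibration with the claimed fibres over $U$, so that irreducibility propagates upward; everything else is bookkeeping.
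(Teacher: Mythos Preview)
The paper does not supply its own proof of this theorem: it is stated as a result of Spaltenstein with a citation to \cite{Spa}, preceded only by the remark that ``a simple induction argument shows that $\Phi$ is a surjection onto $\Std(\lambda)$''. There is therefore no in-paper argument to compare against; your sketch is essentially a reconstruction of Spaltenstein's original inductive proof.

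As a sketch it is sound. The identification of the admissible hyperplane locus $U$ as an open subset of a projective space of dimension $\lambda_c^\top - 1$ is correct (the codimension computation $\dim(e(V)+\ker(e^{c-1})) = N-\lambda_c^\top$ checks out via $e(V)\cap\ker(e^{c-1}) = e(\ker(e^c))$), and the dimension recursion matches. You are right to flag the fibration step as the genuine content: the cleanest way to justify it is not an affine paving per se but the observation that the identity component of $G^e$ acts on $\B_e$ preserving each stratum $\Phi^{-1}(\sigma)$, and acts transitively on $U$, so that $\Phi^{-1}(\sigma)\to U$ is a homogeneous fibre bundle. Without this (or an equivalent argument) the irreducibility of $\Phi^{-1}(\sigma)$ does not follow merely from irreducibility of base and fibre, so that is indeed the point that would need to be filled in for a complete proof.
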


For $\sigma \in \Std(\lambda)$, put $X_{\sigma}:=\Phi^{-1}(\sigma)$. Using the ordering on standard tableaux given in Section~\ref{subsection:ordering} we have the following important property of closures.

\begin{lem}
\label{lem:Xsigmaintersectsclosure}
We have $X_\sigma \cap \overline{X}_\tau = \emptyset$ for $\tau > \sigma$.
\end{lem}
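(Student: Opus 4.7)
The plan is to produce, for any $\tau > \sigma$, a Zariski-closed condition on $\B_e$ that holds on all of $X_\tau$ (hence on $\overline{X_\tau}$) but fails on every flag in $X_\sigma$. The natural candidates are incidence conditions of the form $\dim(\ker(e^j) \cap H_k) \geq s$, which are closed in $\B_e$ because the rank of a linear map is lower semi-continuous. These are precisely the conditions controlled by Lemma~\ref{lemma:spaltenstein}, since whenever $\Type(e|_W) = \nu$ the dimension $\dim(\ker(e^j) \cap W)$ equals the number of boxes in the first $j$ columns of $\nu$, i.e. $\nu^\top_1 + \cdots + \nu^\top_j$.

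First I would locate the critical index. Let $i_0$ denote the largest index at which $\sigma$ and $\tau$ disagree, so $\sigma_{i_0} < \tau_{i_0}$ and $\sigma_k = \tau_k$ for all $k > i_0$. Using Lemma~\ref{lemma:spaltenstein} and descending induction on $k$ from $N$ down to $i_0$, the partition $\Type(e|_{H_k})$ is the same for every $H_\bullet \in X_\sigma$ and for every $H_\bullet \in X_\tau$ whenever $k \geq i_0$ (the removed columns agree at each step $k > i_0$, and both start from $\lambda = \Type(e)$). Denote this common partition at $k = i_0$ by $\mu$.

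Next I would carry out the key comparison one step further. Set $j := \sigma_{i_0}$ and $s := \mu^\top_1 + \cdots + \mu^\top_j$. By Lemma~\ref{lemma:spaltenstein}, for $F_\bullet \in X_\sigma$ the partition $\Type(e|_{F_{i_0-1}})$ is $\mu$ with one box removed from column $j$, while for $G_\bullet \in X_\tau$ the partition $\Type(e|_{G_{i_0-1}})$ is $\mu$ with one box removed from column $\tau_{i_0} > j$. Converting back to kernel dimensions yields $\dim(\ker(e^j) \cap F_{i_0-1}) = s - 1$ for all $F_\bullet \in X_\sigma$, whereas $\dim(\ker(e^j) \cap G_{i_0-1}) = s$ for all $G_\bullet \in X_\tau$.

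To conclude, the subset $Z := \{H_\bullet \in \B_e : \dim(\ker(e^j) \cap H_{i_0-1}) \geq s\}$ is Zariski closed by the semi-continuity remark above and contains $X_\tau$; hence it contains $\overline{X_\tau}$. Since the strict inequality $s - 1 < s$ shows $X_\sigma \cap Z = \emptyset$, we deduce $X_\sigma \cap \overline{X_\tau} = \emptyset$, as required. The main source of friction is likely to be bookkeeping: aligning the convention that $\sigma_i$ records the column of entry $i$ with the ``column of the removed box'' appearing in Lemma~\ref{lemma:spaltenstein}, and carefully tracking which columns are affected at each descending step. Once these conventions are pinned down the semi-continuity argument is essentially forced.
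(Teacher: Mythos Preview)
Your argument is correct. The paper proceeds differently: it simply quotes Spaltenstein \cite[Proposition,~(a)]{Spa} to the effect that each $X_\sigma$ is locally closed and the unions $\bigcup_{\tau'\ge\tau} X_{\tau'}$ are closed in $\B_e$; from this it follows immediately that $\overline{X_\tau}\subseteq\bigcup_{\tau'\ge\tau}X_{\tau'}$, which is disjoint from $X_\sigma$ whenever $\sigma<\tau$ since the fibres of $\Phi$ partition $\B_e$.

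Your route is a self-contained unpacking of exactly the mechanism behind Spaltenstein's closedness statement. By tracking the common shape $\mu$ down to the critical index $i_0$ and then reading off the dimension $\dim(\ker(e^j)\cap H_{i_0-1})$ from the first $j$ columns of the Jordan type, you produce a single explicit Schubert-type inequality that separates $X_\tau$ from $X_\sigma$. This has the advantage of being elementary and independent of the cited reference, and it makes transparent which closed condition is doing the work. The paper's proof, by contrast, is a two-line appeal to a known stratification property; it is shorter and places the lemma in the context of Spaltenstein's general theory, at the cost of hiding the concrete geometry. Either approach is perfectly adequate here.
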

\begin{proof}
By \cite[Proposition, (a)]{Spa} we know that $X_\sigma$ is locally closed and $\bigcup_{\tau > \sigma} X_\sigma$ is closed in $\B_e$. Therefore $\overline X_\tau \subseteq \bigcup_{\tau' \ge \tau} X_{\tau'}$ and the lemma follows from the fact that the fibres of Spaltenstein's map are disjoint.
\end{proof}

As we observed in Section~\ref{ss:nilpotentorbitslevisandinduction} the $G_0$-orbit $\O_0$ is determined by partitions $\mu_1,...,\mu_n$ such that $\mu_i \vdash \lambda_i$. Note that Spaltenstein's map gives a map $\Phi_0 : \B_{e_0}^0 \to \prod_{i=1}^n \Std(\mu_i)$, and the fibre over a tuple $(\sigma^{(1)},...,\sigma^{(n)})$ of standard tableaux is denoted $X_{\sigma^{(1)},...,\sigma^{(n)}}$. 

The next Lemma, combined with Lemma~\ref{L:inductionproperties}(3), shows that in order to understand $\LS$ for any element $e\in \O_\p$ it suffices to understand the morphism for a single element. Thus our representative chosen above may be seen as a typical element for our purposes. It follows directly from the definition of $\LS$ and $\Phi$, see  \eqref{e:LSdefn} and \eqref{e:Spalmapdefn}. 
\begin{proposition}
\label{L:reductionlemma}
Suppose that $\p = \Lie(P)$, $g \in P$ and let $g = g_0 u \in G_0U$ where $U = \Rad(P)$. Then we have a commutative diagrams
\begin{center}
\begin{tikzpicture}[node distance=2cm, auto]
 \node (A) {$\B^0_{e_0}$};
 \node (B) [below of= A] {$\B^0_{g_0\cdot e_0}$};
 \node (C) [right of= A] {$\B_e$};
  \node (D) [right of= B]{$\B_{g\cdot e}$};
 \draw[->] (A) to node [swap]{$g_0$} (B);
 \draw[->] (A) to node {$\LS$} (C);
 \draw[->] (B) to node [swap]{$\LS$} (D);
 \draw[->] (C) to node {$g$} (D);
  \node (E) [right of= C] {$\B_{e}$};
 \node (F) [below of= E] {$ $};
 \node (G) [right of= F] {$\Std(\lambda)$};
  \node (H) [right of= E] {$ $};
   \node (I) [right of= H] {$\B_{g\cdot e}$};
    \draw[->] (E) to node [swap]{$\Phi$} (G);
    \draw[->] (E) to node {$g$} (I);
    \draw[->] (I) to node {$\Phi$} (G);
      \node (J) [right of= I] {$\B_{e_0}$};
 \node (K) [below of= J] {$ $};
 \node (L) [right of= K] {$\prod_{i=1}^n \Std(\mu_i)$};
  \node (M) [right of= J] {$ $};
   \node (N) [right of= M] {$\B_{g_0\cdot e_0}$};
    \draw[->] (J) to node [swap]{$\Phi_0$} (L);
    \draw[->] (J) to node {$g_0$} (N);
    \draw[->] (N) to node {$\Phi_0$} (L);
\end{tikzpicture}
\end{center}
\end{proposition}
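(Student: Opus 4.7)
The plan is to verify each of the three commutative diagrams separately. In all three cases the assertion is a form of equivariance, and the proof boils down to unwinding the definitions of $\LS$ from \eqref{e:LSdefn} and of $\Phi, \Phi_0$ from \eqref{e:Spalmapdefn}.

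For the second and third diagrams I would argue that $\Phi$ (resp.~$\Phi_0$) is $G$-equivariant (resp.~$G_0$-equivariant). If $F_\bullet \in \B_e$ and $g \in G$ then $g \cdot F_\bullet := (gF_i)_i$ lies in $\B_{g\cdot e}$, and the linear isomorphism $g\colon F_i \to gF_i$ intertwines $e|_{F_i}$ with $(g\cdot e)|_{gF_i}$. Hence the two restrictions are conjugate as nilpotent endomorphisms and therefore have the same Jordan type, proving that $\Phi(F_\bullet) = \Phi(g\cdot F_\bullet)$. The identical reasoning with $G_0$ in place of $G$ yields the analogous statement for $\Phi_0$, noting that on each factor $V_i$ of the $\g_0$-decomposition the action of $g_0$ intertwines $e_0|_{V_i \cap F_j}$ with $(g_0\cdot e_0)|_{g_0(V_i \cap F_j)}$.

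For the first diagram, write $g = g_0 u$ with $u \in U = \Rad(P)$. The essential observation is that $\n$ is an ideal of $\p$, so $\Ad(u)x \equiv x \pmod{\n}$ for every $x \in \p$. Applied to $e_0 + e_1 \in \p$ this gives $\Ad(u)(e_0+e_1) = e_0 + n_1$ with $n_1 \in \n$, whence
\[
g \cdot e \;=\; \Ad(g_0)\Ad(u)(e_0+e_1) \;=\; g_0 \cdot e_0 + \Ad(g_0)n_1,
\]
and $\Ad(g_0)n_1 \in \n$ because $G_0$ normalises $\n$. Consequently, if $g_0 \cdot e_0 \in \b_0$ then $g\cdot e \in \b_0 \oplus \n = \LS(\b_0)$, so both composites in the first diagram are well-defined on the indicated Springer fibres. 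Commutativity at the level of Borels is then immediate: the top-right composite sends $\b_0$ to $\Ad(g)(\b_0 \oplus \n) = \Ad(g_0)\Ad(u)(\b_0 \oplus \n) = \Ad(g_0)(\b_0\oplus \n)$, using $\Ad(u)(\b_0 \oplus \n) = \b_0 \oplus \n$, while the left-bottom composite sends $\b_0$ to $\LS(\Ad(g_0)\b_0) = \Ad(g_0)\b_0 \oplus \n = \Ad(g_0)(\b_0 \oplus \n)$ by $G_0$-equivariance of $\LS$ already established in Proposition~\ref{P:LSwelldefined}(1).

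The only conceptually delicate point, and the one I would pay closest attention to, is the first paragraph's assertion that the unipotent factor $u$ does not disturb the identifications: once the congruence $\Ad(u)|_{\p} \equiv \Id \pmod{\n}$ is available everything else is bookkeeping, but without it the compatibility of $\LS$ with the full $P$-action (as opposed to just the $G_0$-action) is not manifest.
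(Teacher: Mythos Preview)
Your proposal is correct and is exactly the detailed verification the paper omits: the paper states only that the result ``follows directly from the definition of $\LS$ and $\Phi$, see \eqref{e:LSdefn} and \eqref{e:Spalmapdefn}'' and gives no further argument. Your unpacking of the three diagrams, and in particular your identification of the key fact $\Ad(u)|_\p \equiv \Id \pmod{\n}$ (which holds because $U$ maps to the identity in $P/U$ and hence acts trivially on $\p/\n$), is precisely what is needed to justify that one-line claim.
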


\subsection{The Lusztig--Spaltenstein map on fibres}
\label{ss:LSonfibres}

Let $\p$ be a parabolic subalgebra of $\g$ and pick a Levi factor $\g_0 \subseteq \p$. Let $\lambda = (\lambda_1,...,\lambda_n)$ denote the ranks of the general linear factors of $\g_0$. Observe that the decomposition $\g_0 \cong \bigoplus_{j=1}^n \gl_{\lambda_j}$ gives a decomposition $\C^N = \bigoplus_{j=1}^N V_j$ where any $V_j$ is stable under the action of every $\gl_{\lambda_k}$ and the restriction of $\gl_{\lambda_j}$ to endomorphisms of $V_j$ is an isomorphism. We order the spaces $V_j$ in such a way that $\p$ maps $V_j$ to $\bigoplus_{k=1}^{j} V_j$, and we note that $\lambda$ is a composition of $N$, not a partition in general.\\

We pick a nilpotent orbit $\O_0 \subseteq \g_0$ and choose an element $e\in \Ind^\g_{\g_0}(\O_0) \cap \p$. Let $e = e_0 + e_1$ be the decomposition of $e$ across the direct sum $\p = \g_0 \oplus \Rad(\p)$. Now we can consider the map $$\LS : \B_{e_0}^0 \longrightarrow \B_e.$$

By our choices above we note that setting $W_j = \bigoplus_{k=1}^j V_k$ determines a partial flag of $\k^N$, and $\p$ is described by $\p = \{x\in \g \mid xW_j \subseteq W_j \text{ for all } j\}$. In the following we regard the elements of $\B$ as full flags of $\C^N$, and similarly we regard an element of $\B^0$ as a tuple $(F^{(1)},...,F^{(n)})$ where each $F^{(j)}$ is a flag of $V_j$. In order to describe $\LS$ in this language we must rephrase the map in terms of flags.
\begin{lem} We have
\label{L:LSonflags}
$$
\LS(F_{\bullet}^{(1)}, \ldots, F_{\bullet}^{(n)}): ={\oF}_{\bullet} =(0 \subset \oF_{1} \subset  \oF_{2} \subset \cdots \subset  \oF_{N-1} \subset \k^{N})
$$
where 
$$
 \oF_{i}  = \begin{cases} F_{i}^{(1)} & \text{for} \; 1 \leq i \leq \lambda_{1} \\ W_{j-1} + F_{k}^{(j)} & \text{for}\; i > \lambda_{1} \; \text{where $j$ is maximal such that} \;  \displaystyle \sum_{l=1}^{j-1} \lambda_{l} < i\; \text{and} \; k=i - \sum_{l=1}^{j-1} \lambda_{l}. \end{cases} 
$$
\end{lem}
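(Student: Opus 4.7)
The plan is to unwind the Borel--flag correspondence on both sides of the map $\LS : \B^0 \to \B$ and match the definition $\b_0 \mapsto \b_0 \oplus \n$ against the flag formula. Throughout, points of $\B^0 \cong \prod_{j=1}^n \B(V_j)$ correspond to tuples of full flags $(F_\bullet^{(1)},\ldots,F_\bullet^{(n)})$ via $\b_0 \leftrightarrow $ its fixed flag in each factor, and points of $\B$ correspond to full flags $\oF_\bullet$ of $\k^N$ via $\b \leftrightarrow $ its fixed flag.

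First I would check that the $\oF_i$ defined in the statement really do form a full flag: for $1\le i \le \lambda_1$ clearly $\dim \oF_i = i$ and $\oF_i \subset \oF_{i+1}$; for $i>\lambda_1$, with $j$ and $k$ as in the statement, $\dim W_{j-1} = \sum_{l<j}\lambda_l$ and $\dim F_k^{(j)} = k$, and since $W_{j-1} \cap V_j = 0$ we get $\dim \oF_i = \sum_{l<j}\lambda_l + k = i$. The inclusions $\oF_{i-1} \subset \oF_i$ are immediate within each block and at the transition from block $j-1$ to block $j$ we have $\oF_{\sum_{l\le j-1}\lambda_l} = W_{j-1} \subsetneq W_{j-1} + F_1^{(j)} = \oF_{\sum_{l\le j-1}\lambda_l + 1}$.

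Next I would identify the stabiliser. Let $\b := \{x\in \g \mid x\oF_i \subseteq \oF_i \text{ for all } i\}$. Since $W_j = \oF_{\lambda_1+\cdots+\lambda_j}$ for each $j$, any $x\in \b$ stabilises the partial flag $W_\bullet$, hence $\b \subseteq \p = \g_0 \oplus \n$. Write $x = x_0 + x_1$ with $x_0 \in \g_0$ and $x_1 \in \n$. Then $x_1$ automatically stabilises $\oF_\bullet$: indeed $x_1 W_j \subseteq W_{j-1} \subseteq \oF_i$ whenever $\lambda_1+\cdots+\lambda_{j-1} < i \le \lambda_1+\cdots+\lambda_j$, since $\oF_i \supseteq W_{j-1}$ in this range. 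Hence the condition on $x$ reduces to $x_0 \oF_i \subseteq \oF_i$. Since $x_0$ preserves each $V_j$, and since modulo $W_{j-1}$ the subspace $\oF_i$ projects to $F_k^{(j)} \subseteq V_j$ (with $j,k$ as above), this condition becomes $x_0 F_k^{(j)} \subseteq F_k^{(j)}$ for all $j,k$. This is exactly the defining condition for $x_0 \in \b_0$. Conversely any $x_0 \in \b_0$ and $x_1 \in \n$ visibly stabilise $\oF_\bullet$ by the same calculation. Therefore $\b = \b_0 \oplus \n = \LS(\b_0)$, matching the flag $\oF_\bullet$ to the Borel subalgebra $\LS(\b_0)$.

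The entire proof is essentially bookkeeping with indices, and I anticipate no serious obstacle; the only point requiring care is the transition between consecutive blocks, where one must check that adjoining the direction $F^{(j)}_k / F^{(j)}_{k-1}$ inside $V_j/0$ — rather than inside some quotient by $W_{j-1}$ — gives the correct one-dimensional increment of $\oF_i$. As a sanity check one may note that the construction is manifestly $G_0$-equivariant (since $G_0$ preserves each $V_j$ and normalises $\n$), so it suffices to verify the formula on a single chosen Borel $\b_0$, where it reduces to the standard identification of upper-triangular matrices with their flag.
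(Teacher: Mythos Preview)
The paper does not provide a proof of this lemma; it is stated without argument, presumably because the authors regard it as a straightforward translation of the definition $\b_0 \mapsto \b_0 \oplus \n$ through the Borel--flag correspondence. Your proposal supplies exactly that translation, and the argument is correct: you verify that the $\oF_i$ form a full flag, observe that the partial flag $W_\bullet$ sits inside $\oF_\bullet$ so that the stabiliser lies in $\p$, and then split $x = x_0 + x_1$ to reduce to the condition $x_0 \in \b_0$. The key step---that $x_1 \oF_i \subseteq x_1 W_j \subseteq W_{j-1} \subseteq \oF_i$ whenever $\oF_i$ lies in the $j$th block---is handled correctly.
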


As we observed in Section~\ref{ss:nilpotentorbitslevisandinduction} the $G_0$-orbit $\O_0$ is determined by partitions $\mu_1,...,\mu_n$ such that $\mu_i \vdash \lambda_i$. Note that Spaltenstein's map gives a map $\Phi_0 : \B_{e_0}^0 \to \prod_{i=1}^n \Std(\mu_i)$, and the fibre over a tuple $(\sigma^{(1)},...,\sigma^{(n)})$ of standard tableaux is denoted $X_{\sigma^{(1)},...,\sigma^{(n)}}$. The next lemma follows directly from the definition of $\LS$ and $\Phi$, see  \eqref{e:LSdefn} and \eqref{e:Spalmapdefn}.

Our Main Theorem will follow fairly quickly from the next Proposition.
\begin{proposition} \label{prop:LSmap}
For each $i=1,...,n$ we choose a standard tableau $\sigma^{(i)}$ for $\mu_i$. Then
$$\LS(X_{\sigma^{(1)},...,\sigma^{(n)}}) \subseteq \bigcup_{\tau \ge \stk(\sigma^{(1)}, \ldots, \sigma^{(n)})} X_\tau.$$
\end{proposition}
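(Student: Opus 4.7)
The plan is to reduce via Proposition~\ref{L:reductionlemma} to the specific representative of $(e_0, e)$ from Section~\ref{ss:representative}, where one has a preferred Jordan basis $\{v_{k, j, l}\}$ of $V$ indexed by the boxes of each $\mu_k$, and then to define a \emph{canonical flag} $\widehat{F} = (\widehat{F}^{(1)}_\bullet, \ldots, \widehat{F}^{(n)}_\bullet)$ by
$$\widehat{F}^{(k)}_{i'} := \Span\bigl\{v_{k, j, l} \,:\, \text{the entry of } \sigma^{(k)} \text{ at position } (j, l) \text{ is at most } i'\bigr\}.$$
Standardness of $\sigma^{(k)}$ gives $e_0 \widehat{F}^{(k)}_{i'} \subseteq \widehat{F}^{(k)}_{i'-1}$ and $\Phi_0(\widehat{F}^{(k)}_\bullet) = \sigma^{(k)}$, so $\widehat{F}$ lies in $X_{\sigma^{(1)}, \ldots, \sigma^{(n)}}$.

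Next I show directly that $\LS(\widehat{F}) \in X_{T^\Sigma}$, where $T^\Sigma := \stk(\sigma^{(1)}, \ldots, \sigma^{(n)})$. By Lemma~\ref{L:LSonflags}, the $i$-th subspace of the flag $\LS(\widehat{F})$ is spanned by exactly those Jordan basis vectors $v_{k, j, l}$ which, under the box-to-basis-vector identification illustrated in Example~\ref{ex:representativediagram}, correspond to positions of entries $\le i$ in $T^\Sigma$. Standardness of $T^\Sigma$ makes these positions form a Young sub-diagram of $\mu^\Sigma$, and since $e$ acts on the stacked diagram by leftward translation within each row, this sub-diagram is $e$-stable and the Jordan type of $e$ restricted to the $i$-th subspace equals its row-length shape. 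This row-length shape is by construction the shape of entries $\le i$ in $T^\Sigma$, so $\Phi(\LS(\widehat{F})) = T^\Sigma$.

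The extension from $\widehat{F}$ to the whole fibre uses irreducibility, closedness of $\LS$, and semi-continuity. Since $\overline{X_{\sigma^{(1)}, \ldots, \sigma^{(n)}}}$ is an irreducible component of $\B^0_{e_0}$, Corollary~\ref{cor:inducedmaponcomponents} gives $\LS\bigl(\overline{X_{\sigma^{(1)}, \ldots, \sigma^{(n)}}}\bigr) = \overline{X_{\tau_0}}$ for some $\tau_0 \in \Std(\mu^\Sigma)$, and the inclusion $\LS(\widehat{F}) \in X_{T^\Sigma} \cap \overline{X_{\tau_0}}$ forces $\tau_0 \le T^\Sigma$ via Lemma~\ref{lem:Xsigmaintersectsclosure}. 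To upgrade this to $\tau_0 = T^\Sigma$, I claim the canonical flag simultaneously attains the minimum value of each $\dim(K_j \cap \LS(F)_i)$ (where $K_j := \ker e^j$) as $F$ ranges over $X_{\sigma^{(1)}, \ldots, \sigma^{(n)}}$; by upper semi-continuity this cuts out an open dense subset of the irreducible fibre on which $\Phi \circ \LS \equiv T^\Sigma$, and closedness of $\LS^{-1}(\overline{X_{T^\Sigma}})$ then forces $\LS(X_{\sigma^{(1)}, \ldots, \sigma^{(n)}}) \subseteq \overline{X_{T^\Sigma}} = \bigcup_{\tau \ge T^\Sigma} X_\tau$, which is the proposition. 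The main obstacle is this minimality claim: via the decomposition $\LS(F)_i = W_{k-1} + F^{(k)}_{i'}$ it reduces to showing $\dim(\widehat{F}^{(k)}_{i'} \cap \pi_k(K_j))$ is minimal over $F^{(k)}_\bullet \in X_{\sigma^{(k)}}$ (where $\pi_k : V \to V_k$ is the projection), which requires a careful combinatorial dimension count using the explicit description of $\pi_k(K_j)$ as a specific ``staircase'' subspace of $V_k$ spanned by Jordan basis vectors.
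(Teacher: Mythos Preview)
Your construction of the canonical flag $\widehat{F}$ and the verification that $\LS(\widehat{F}) \in X_{T^\Sigma}$ are correct; in fact this is precisely the Claim that the paper proves inside Theorem~\ref{T:stackwithproof} (the main theorem), not inside Proposition~\ref{prop:LSmap}. So your first two paragraphs belong to the proof of the stacking theorem rather than to this proposition.

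The gap is in your third paragraph. You correctly identify the ``minimality claim'' as the main obstacle, and then leave it unproven: ``requires a careful combinatorial dimension count'' is not a proof. But this claim carries the entire weight of your argument. Without it you only know that $\LS(\widehat{F}) \in X_{T^\Sigma}$ for one particular flag, which says nothing about a general $F \in X_{\sigma^{(1)},\ldots,\sigma^{(n)}}$. Also note a minor slip: you write $\overline{X_{T^\Sigma}} = \bigcup_{\tau \ge T^\Sigma} X_\tau$, but only the inclusion $\subseteq$ holds (this is all you need, so the argument survives).

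The paper's proof of the proposition is entirely different and avoids the minimality claim. It argues directly with an \emph{arbitrary} flag $F \in X_{\sigma^{(1)},\ldots,\sigma^{(n)}}$: setting $\tau = \Phi(\LS(F))$ and letting $i$ be the largest index with $\tau_i \ne \sigma^\Sigma_i$, one truncates everything to $V' := \bar{F}_i$ and observes that the truncated data again satisfy the hypotheses of the proposition (the key point being that $e' := e|_{V'}$ still lies in the induced orbit, since the shapes of $\tau$ and $\sigma^\Sigma$ agree above level $i$). This reduces to the case $i = N$, where one must only show $\tau_N \ge \sigma^\Sigma_N$. That follows from a single kernel containment $\ker(e^{j'-1}) \subseteq \bar{F}_{N-1}$ (with $j' = \sigma^\Sigma_N$), verified using the explicit representative of Section~\ref{ss:representative} and Lemma~\ref{lemma:spaltenstein}. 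No semi-continuity, no simultaneous minimisation over all $(i,j)$ is needed---just one inclusion at the top step.

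In short: your approach conflates the proposition with the main theorem, and the piece you defer (the minimality claim) is at least as hard as the proposition itself. The paper's inductive reduction to the top index is both simpler and complete.
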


\begin{proof}
Let $(F^{(1)},\cdots,F^{(n)}) \in \B^0_{e_0}$, and suppose that $(F^{(1)},\cdots,F^{(n)})\in \Phi_0^{-1}(\sigma^{(1)},...,\sigma^{(n)})$. Let $\LS(F^{(1)},...,F^{(n)}) = \oF_\bullet$, which is described explicitly in Lemma~\ref{L:LSonflags}. Suppose that $\oF_\bullet \in \Phi^{-1}(\tau)$ for some $\tau \in \Std(\lambda)$. Put $\sigma^{\Sigma}= \stk(\sigma^{(1)},...,\sigma^{(n)})$. In order to prove the proposition we must show that $\tau \ge \sigma^\Sigma$. If $\tau = \sigma$ then we are done, so we may assume that $\tau \ne \sigma$.\\

Recall that $\tau_i$ is the column of $\tau$ where $i$ occurs. To establish the Proposition, we need to show that there exists an index $i \in \{1,\ldots, N\}$ such that $\tau_{i} > \sigma_{i}^{\Sigma}$ and $\tau_{k} =  \sigma_{k}^{\Sigma}$ for $k > i$. Let $i$ be the maximal index such that $\tau_i \ne \sigma^\Sigma_i$. Now define $V' := \oF_i$ and we consider $\g' = \gl(V')$, $e' := e|_{V'} \in \g'$, $\g_0' := \g_0|_{V'}$, $G_0' := G_0|_{V'}$ and $\p' := \p|_{V'}$. Also write $\oF'_\bullet = (0\subseteq \oF_1 \subseteq \cdots \subseteq \oF_{i-1} \subseteq V')$ which is a full flag of $V'$. Write $\B'$ for the flag variety of $\g'$ and $\B'_{e'}$ for the Springer fibre, and $\Phi'$ for Spaltenstein's map. Note that $\oF'_\bullet \in \B'_{e'}$ and that $\Phi'(\oF_\bullet')$ consists of the boxes of $\tau$ with labels $1,...,i$.\\

Now let $j \in \{0,...,n-1\}$ be the largest index such that $\sum_{k=1}^j \lambda_k < i$, and set $d := i-\sum_{k=1}^{j-1}$. By Lemma~\ref{L:LSonflags} we know that $V_1,...,V_{j-1} \subseteq V'$ and $\tilde V_j := \oF_i \cap V_j$ satisfies $V_1 \oplus \cdots V_{j-1} \oplus \tilde V_{j}$. Therefore $\p'$ is the parabolic subalgebra of $\g'$ stabilising the partial flag $W'_\bullet$ of $V'$ given by $W'_k = W_k$ for $k=1,..,j-1$ and $W'_j = V'$, whilst $\g_0'$ is a Levi factor of $\p'$. It follows that the projection $e_0'$ of $e'$ across $\p' = \g_0' \oplus \nil(\p')$ is equal to the restriction $e_0|_{V'}$. The $G_0'$-orbit of $e_0'$ is determined by partitions $\mu_1',...,\mu_j'$ which are described as follows: $\mu_k = \mu_k'$ for $k=1,...,j-1$ and $\mu_j'$ is the shape of the tableau $\sigma^{(j)}{}'$ obtained from $\sigma^{(j)}$ by considering only the boxes labelled $1,2,...,d$.  \\

We claim that $e'$ lies in the nilpotent orbit in $\g'$ induced from $(\g_0', G_0'\cdot e_0')$. To see this write $\sigma^{\Sigma}{}'$ for the tableau formed from $\sigma^\Sigma$ by considering only the boxes with entries $1,...,i$. Similarly define $\tau'$. Using Corollary~\ref{cor:partitionsigma} and the remarks of the previous paragraph we see that the shape of $\sigma^{\Sigma}{}'$ is the Young diagram of the induced orbit $\Ind_{\g_0'}^{\g'}(G_0' \cdot e_0')$. On the other hand the shape of $\tau'$ is the Young diagram of the orbit of $e'$. Since we assumed that $\tau_j = \sigma^\Sigma_j$ for $j=i+1,...,N$ it follows that the shape of $\tau'$ is the shape of $\sigma^{\Sigma}{}'$, and this confirms the claim.\\

Let $F^{(1)}{}'_\bullet,...,F^{(j)}{}'_\bullet$ be the collection of flags of $V_1,...,V_{j-1}, \tilde V_j$ given by $F^{(k)}{}'_\bullet := F^{(k)}_\bullet$ for $k=1,..,j-1$ and $F^{(j)}{}'_\bullet := (0 \subseteq F^{(j)}_1\subseteq \cdots \subseteq F^{(j)}_{d-1} \subseteq \tilde V_j)$. By Lemma~\ref{L:LSonflags} the Lusztig--Spaltenstein map on fibres sends $(F^{(1)}{}'_\bullet,...,F^{(j)}{}'_\bullet)\in\B^0_{e_0'}{}'$ to $\oF'_\bullet \in \B'_{e'}$. The standard tableau $\sigma^{(1)}{}',...,\sigma^{(j)}{}'$ corresponding to $F^{(1)}{}'_\bullet,...,F^{(j)}{}'_\bullet$ are determined from $\sigma^{(1)},...,\sigma^{(j)}$ in the obvious manner. The tableau $\sigma^{\Sigma}{}' := \stk(\sigma^{(1)}{}',...,\sigma^{(j)}{}')$ is the numbered diagram obtained from $\sigma^\Sigma$ by considering only the boxes labelled $1,...,i$. Since we have assumed $\sigma_i^\Sigma \ne \tau_i$, it follows that $\sigma_i^{\Sigma}{}' \ne \tau_i'$. To prove the Proposition we must show that $\sigma_i^\Sigma{}' > \tau'_i$.\\

Since we are in precisely the same situation as the start of the proof, we might as well simplify our notation by assuming that $i = N$, and working with $\g, \p, e, \g_0, e_0$ rather than $\g', \p', e', \g_0', e_0'$. We shall assume $\sigma_N^\Sigma \ne \tau_N$ to prove $\sigma_N^\Sigma > \tau_N$. \\

Suppose that $\lambda_n$ is in the $(i,j)$-th position of $\sigma^{(n)}$ (so we have now fixed $i$ and $j$). Then by the formula in \eqref{eq:stack}, $N$ occupies the $(i,j')$-th position in $\sigma^{\Sigma}$, where $ j' = \sum_{k=1}^{n-1} \mu_{k,i} + j$; that is, $\sigma_{N}^{\Sigma}=j'$. Hence we need to show that $\bar{F}_{N-1} \supseteq \ker(e^{j'-1})$ to deduce that $\tau_{N} \geq j' =\sigma_{N}^{\Sigma}$. From the outset we assumed that $\tau_N \ne \sigma^\Sigma_N$ and so this will imply that $\tau > \sigma^\Sigma$ and complete the proof.\\

By Lemma~\ref{L:LSonflags}, we have $\oF_{N-1} = W_{n-1} \oplus F^{(n)}_{\lambda_{n}-1}$ and we have that $j$ is the maximal index such that $F^{(n)}_{\lambda_n-1} \supseteq \ker(e_{0|V_{n}}^{j-1})$. At this point, we use Proposition \ref{L:reductionlemma} to make an explicit choice for $e_0$ and $e_1$ that comes with a Jordan basis as in Section~\ref{ss:representative}. Thus we picture the $N$-dimensional vector space as the span of all the boxes in the tuple of Young diagrams corresponding to the partitions $(\mu_1, \ldots, \mu_{n})$, and hence we may identify $W_{n-1}$ as the span of the boxes in the first $n-1$ Young diagrams and $F_{\lambda_{n}-1}^{(N)}$ as the span of all the boxes in the last Young diagram except for the box in the $(i,j)$-th position (or the box at the bottom of column $j$ in last Young diagram). This pictorial interpretation is explained clearly in Example~\ref{ex:representativediagram}. \\

Since may we diagrammatically represent $\ker(e^{j'-1})$ as the span of the boxes in the first $j'-1$ columns of the stacked Young diagram (see Diagram 2 in Section~\ref{ss:representative}), it follows that $\ker(e^{j'-1}) \subseteq W_{n-1} \oplus \ker(e_{0|V_{n}}^{j-1})$. Since $\ker(e_{0|V_{n}}^{j-1}) \subseteq F^{(n)}_{\lambda_n-1}$ by construction, it follows that 
$$
\ker(e^{j'-1})  \subseteq W_{n-1} \oplus  F^{(n)}_{\lambda_{n}-1}= \oF_{N-1}.
$$
Therefore $\tau_{N} \geq j' =\sigma_{N}^{\Sigma}$ and so $\tau > \sigma^{\Sigma}$, and the proof is complete. 
\end{proof}

The following example illustrates that $\LS(X_{\sigma_1,...,\sigma_n})$ is not contained in $X_{\stk(\sigma_1,....,\sigma_n)}$ in general, and so the previous proposition is best possible.
\begin{example}
Let $\k^5$ have basis $\{v_1, v_2, v_3, v_4, v_5\}$ and let $V_1 = \Span\{v_1, v_2, v_3\}$ and $V_2 = \Span\{ v_4, v_5\}$. Let $e_{i,j}$ be the standard basis for $\g = \gl_5$ with respect to this basis of $\k^5$ and let $\g_0$ be the Levi factor of $\g$ preserving the decomposition $\k^5 = V_1\oplus V_2$. Let $\p$ be the parabolic preserving the flag $0 \subseteq V_1 \subseteq \k^5$. We define $e_0 := e_{1,2}$ and $e_1 = e_{3,4} + e_{2,5}$.  One can check that these data satisfy the set up of Proposition~\ref{prop:LSmap}.

Then if $F^{(1)}$ is any full flag of $V_1$ preserved by $e_0$, satisfying $e_0|_{F^{(1)}_{2}} = 0$, and $$F^{(2)} = (0 \subseteq \k v_5 \subseteq V_2)$$ then $(F^{(1)}, F^{(2)}) \in \B^0_{e_0}$. On the other hand, $\LS(F^{(1)}, F^{(2)})$ lies in $X_\tau$ where
$$\tau = \young(135,24)$$
However the stacked tableau $\sigma$ corresponding to $F^{(1)}, F^{(2)}$ is the one appearing in \eqref{e:atab}. In the notation of Section~\ref{subsection:ordering} we have $\tau_5 = 3 > 2 = \sigma_5$ and so $\tau \gneq \sigma$.
\end{example}

\subsection{The stacking theorem}

Retain the notation and choices made at the start of Section~\ref{ss:LSonfibres}. Let $\sigma^{(1)}, \ldots, \sigma^{(n)}$ be a choice of standard tableaux of shape $\mu_1,...,\mu_n$ respectively. 
By Spaltenstein's theorem, the closure $\overline{X}_{\sigma^{(1)}, \ldots, \sigma^{(n)}}$ is an irreducible component of $\B^0_{e_0}$ which we denote $C_{\sigma^{(1)}, \ldots, \sigma^{(n)}}$. Similarly for $\sigma$ a standard tableau of shape $\mu^\Sigma$ we write $C_\sigma := \overline{X}_\sigma \subseteq \B_e$, which is an irreducible component of $\B_e$.

The following is our main result.
\begin{thm}
\label{T:stackwithproof}
$\LS(C_{\sigma^{(1)}, \ldots, \sigma^{(n)}}) = C_{\stk(\sigma^{(1)}, \ldots, \sigma^{(n)})}$.
\end{thm}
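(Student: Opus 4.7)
The plan is a standard sandwich argument: I aim to show that $\LS(C_{\sigma^{(1)},\ldots,\sigma^{(n)}}) = C_\tau$ for some $\tau$ with both $\tau \ge \sigma^\Sigma$ and $\tau \le \sigma^\Sigma$, where $\sigma^\Sigma := \stk(\sigma^{(1)},\ldots,\sigma^{(n)})$.

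For the upper bound, Proposition~\ref{P:LSwelldefined}(2) together with Corollary~\ref{cor:inducedmaponcomponents} imply that
$$\LS(C_{\sigma^{(1)},\ldots,\sigma^{(n)}}) = \overline{\LS(X_{\sigma^{(1)},\ldots,\sigma^{(n)}})} = C_\tau$$
for a uniquely determined $\tau \in \Std(\mu^\Sigma)$. Combining Proposition~\ref{prop:LSmap} with the fact that closure commutes with finite unions gives
$$C_\tau \subseteq \bigcup_{\tau' \ge \sigma^\Sigma} C_{\tau'}.$$
By Lemma~\ref{lem:Xsigmaintersectsclosure} the dense open subset $X_\tau$ of $C_\tau$ is disjoint from $C_{\tau'}$ whenever $\tau' > \tau$, so the only $\tau'$ on the right which can contain a point of $X_\tau$ must satisfy $\tau' \le \tau$; since $\tau' \ge \sigma^\Sigma$, this yields $\tau \ge \sigma^\Sigma$.

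The reverse inequality requires producing an explicit point in $\LS(X_{\sigma^{(1)},\ldots,\sigma^{(n)}}) \cap X_{\sigma^\Sigma}$. Using Proposition~\ref{L:reductionlemma} I reduce to the representative $e = e_0 + e_1$ of Section~\ref{ss:representative} with its Jordan basis $\{v_{i,j,k}\}$. I would identify the box of $\mu_i$ at position $(j,k)$ with $v_{i,j,k}$ and, from each $\sigma^{(i)}$, build an $e_0$-stable flag $F^{(i)}_\bullet$ of $V_i$ by letting $F^{(i)}_r$ be the span of those $v_{i,j,k}$ whose box in $\sigma^{(i)}$ carries a label $\le r$. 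Standardness of $\sigma^{(i)}$ guarantees that this really is an $e_0$-stable flag and that $(F^{(1)}_\bullet,\ldots,F^{(n)}_\bullet) \in X_{\sigma^{(1)},\ldots,\sigma^{(n)}}$. Computing $\oF_\bullet := \LS(F^{(1)}_\bullet,\ldots,F^{(n)}_\bullet)$ through Lemma~\ref{L:LSonflags}, one finds that $\oF_r$ is the span of those Jordan basis vectors which correspond, under the stacking correspondence, to the boxes of $\sigma^\Sigma$ with labels $1,\ldots,r$. Since the pictorial description in Example~\ref{ex:representativediagram} shows that $e = e_0 + e_1$ moves every box one step to the left in the stacked diagram $\mu^\Sigma$, the Jordan type of $e|_{\oF_r}$ is precisely the shape of the corresponding sub-diagram. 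This gives $\Phi(\oF_\bullet) = \sigma^\Sigma$, so $\oF_\bullet \in X_{\sigma^\Sigma} \cap C_\tau$, and Lemma~\ref{lem:Xsigmaintersectsclosure} forces $\tau \le \sigma^\Sigma$, closing the sandwich.

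The main obstacle is the last identification. The pictorial interpretation in Example~\ref{ex:representativediagram} makes the Jordan-type computation intuitive, but rigorising it requires careful bookkeeping: one must verify by induction on $r$ that the subspace $\oF_r$ really is spanned by the vectors whose boxes carry labels $1,\ldots,r$ in $\sigma^\Sigma$, that this sub-shape is a Young sub-diagram of $\mu^\Sigma$, and that the restriction $e|_{\oF_r}$ has the claimed Jordan type — the crux being that the combinatorial stacking recipe \eqref{eq:stack} exactly matches the geometric effect of the Lusztig--Spaltenstein map on the chosen flags.
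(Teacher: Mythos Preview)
Your proposal is correct and follows essentially the same approach as the paper: both arguments combine Corollary~\ref{cor:inducedmaponcomponents} and Proposition~\ref{prop:LSmap} to deduce $\LS(C_{\sigma^{(1)},\ldots,\sigma^{(n)}}) = C_\tau$ for some $\tau \ge \sigma^\Sigma$, then construct an explicit flag (from the Jordan basis of Section~\ref{ss:representative}, via Proposition~\ref{L:reductionlemma}) whose $\LS$-image lies in $X_{\sigma^\Sigma}$, and finally invoke Lemma~\ref{lem:Xsigmaintersectsclosure} to force $\tau = \sigma^\Sigma$. The paper presents these steps in the reverse order and handles the Jordan-type computation with the indexing set $\cX_j$ rather than your inductive phrasing, but the substance is identical.
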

\begin{proof}
The first step is to show that there is an element $(F^{(1)}_\bullet,...,F^{(n)}_\bullet) \in \B^0_{e_0}$ such that $\LS(F^{(1)}_\bullet,...,F^{(n)}_\bullet) \in X_{\stk(\sigma^{(1)},...,\sigma^{(n)})}$. Notice that using the argument in the first paragraph of the proof of Proposition~\ref{prop:LSmap} it suffices to find such a flag for a particular choice of $e\in \O_\p$, and it follow for all such elements. Pick a basis $\{v_{j,l,m} \mid j, l, m\}$ for $\k^N$ such that $\{v_{j,l,m} \mid l, m\}$ spans the spaces $V_j$ described in Section~\ref{ss:LSonfibres} and then choose an element $e = e_0 + e_1 \in \O_\p$ using formulas \eqref{e:e0defn} and \eqref{e:e1defn}. For each $k=1,...,n$ we define a full flag $F^{(k)}$ of $V_j$ by letting $F^{(k)}_j$ be the span of those vectors $v_{k,l,m}$ such that the $(l,m)$-entry of $\sigma^{(k)}$ is less than or equal to $j$.\vspace{6pt}

\noindent {\bf Claim.} \ $(F^{(1)}_\bullet, \ldots, F^{(n)}_\bullet) \in \B^0_{e_0}$ and $\LS(F^{(1)}_\bullet, \ldots, F^{(n)}_\bullet) \in X_{\stk(\sigma^{(1)},...,\sigma^{(n)})}$.\vspace{6pt}

The first part of the claim is obvious from the construction. To prove the second part we describe the Jordan blocks of $e|_{\oF_j}$ where $\oF_{\bullet} := \LS(F^{(1)}_\bullet, ..., F^{(n)}_\bullet)$. For $j = 1,...,N$ we let $k \ge 0$ be such that $\sum_{i=1}^{k-1} \lambda_i < j \le \sum_{i=1}^{k} \lambda_i$. Write
$$\cX_j := \{(i,l,m)\mid i < k \text{ or } i = k  \text{ and the } (l,m)\text{-entry of } \sigma^{(k)} \text{ is less than or equal to }\tilde j := j - \sum_{r=1}^{k-1} \lambda_r\}.$$
Then $\oF_j$ is spanned by $\{v_{i,l,m} \mid (i,l,m) \in \cX_j\}$. Now if we fix $1\le i \le \max \ell(\mu_s)$ then the span of $\{v_{i,l,m} \mid (i,l,m) \in \cX_j\}$ is a single Jordan block for $e$. We invite the reader to check this claim using Diagram 2 in Example~\ref{ex:representativediagram}. Combining this description of the Jordan blocks of $e|_{\oF_k}$ with the relationship between standard tableaux and sequences of nested partitions given in Section~\ref{ss:LSonfibres} it follows that $\Phi(\oF_\bullet) = \stk(\sigma^{(1)},...,\sigma^{(n)})$. This proves the claim.

By Corollary~\ref{cor:inducedmaponcomponents} and Proposition~\ref{prop:LSmap} we know that $\LS(\overline{X}_\sigma^{(1)},...,\sigma^{(n)})$ is equal to $\overline{X}_{\tau}$ for some $\tau \ge \stk(\sigma^{(1)},...,\sigma^{(n)})$. Combining the above Claim with Lemma~\ref{lem:Xsigmaintersectsclosure} we deduce that $\tau = \stk(\sigma^{(1)},...,\sigma^{(n)})$, which completes the proof.
\end{proof}

\begin{rem}
A direct consequence of the stacking theorem is that $\stk$ is an associative operation on tableaux, which can also be checked directly by a combinatorial argument.
\end{rem}

\bibliographystyle{alpha}
\bibliography{LS-bibliography}

\end{document}